\newtheorem{theorem}{Theorem}[section]
\newtheorem{lemma}[theorem]{Lemma}
\theoremstyle{definition}
\theoremstyle{remark}
\numberwithin{equation}{section}
\begin{document}

% \title[short text for running head]{full title}
\title[On Davenport's constant]{Davenport's constant for groups with
  large exponent}

%    Only \author and \address are required; other information is
%    optional.  Remove any unused author tags.

%    author one information
% \author[short version for running head]{name for top of paper}
\author{}
\address{}
\curraddr{}
\email{}
\thanks{}

%    author two information
\author[G. Bhowmik]{Gautami Bhowmik}
\address{Universit\'e de Lille 1\\
Laboratoire Paul Painlev\'e UMR CNRS 8524\\ 
59655 Villeneuve d'Ascq Cedex\\
France}
\curraddr{}
\email{bhowmik@math.univ-lille1.fr}
\thanks{}

\author[J.-C. Schlage-Puchta]{Jan-Christoph Schlage-Puchta}
\address{Universiteit Gent\\ Krijgslaan 281\\ Gebouw S22\\ 9000
  Gent\\ Belgium}
\email{jcp@math.uni-freiburg.de}

\subjclass[2000]{Primary 11B13, 11B70}
%    The 2010 edition of the Mathematics Subject Classification is
%    now available.  If you are citing a classification from the
%    new scheme, use the following input coding instead.
%\subjclass[2010]{Primary }

\date{}

\begin{abstract}
Let $G$ be a finite abelian group. We show that its Davenport constant
$D(G)$ satisfies $D(G)\leq
\exp(G)+\frac{|G|}{\exp(G)}-1$, provided that $\exp(G)\geq\sqrt{|G|}$,
and $D(G)\leq 2\sqrt{|G|}-1$, if $\exp(G)<\sqrt{|G|}$. This proves a
conjecture by Balasubramanian and the first named author.
\end{abstract}

\maketitle

%    Text of article.

%    Bibliographies can be prepared with BibTeX using amsplain,
%    amsalpha, or (for "historical" overviews) natbib style.
\bibliographystyle{amsplain}
%    Insert the bibliography data here.
\newcommand{\Z}{\mathbb{Z}}
\newcommand{\s}{\mathfrak{s}}
\newcommand{\F}{\mathbb{F}}
\section{Introduction and results}
For an abelian group $G$ we denote by $D(G)$ the least integer $k$, such
that every sequence $g_1, \ldots, g_k$ of elements in $G$ contains a
subsequence $g_{i_1}, \ldots, g_{i_\ell}$ with
$g_{i_1}+\dots+g_{i_\ell}=0$.

Write $G=\Z_{n_1}\oplus\dots\oplus\Z_{n_r}$ with $n_1|\dots|n_r$,
where we write $\Z_n$ for $\Z/n\Z$. Put $M(G)=\sum n_i -r+1$.
In several cases, including 2-generated groups and $p$-groups, the
value of $D(G)$ matches with the obvious lower bound $M(G)$, however, in
general this is not true. In fact there are infinitely many groups of
rank 4 or more where $D(G)$ is greater than $M(G)$
 see, for example, \cite{barcelona}. As far as upper bounds are
 concerned we have only rather crude ones.
 One such example, which 
is appealing for its simple structure, is the estimate $D(G)\leq\exp(G)
\big(1+\log\frac{|G|}{\exp(G)}\big)$, due to van Emde Boas and
Kruyswijk\cite{EBK}. This bound, for the case when
$\frac{|G|}{\exp(G)}$ is small, was improved by Bhowmik and
Balasubramanian \cite{BaB} who proved that
$D(G)\leq\frac{|G|}{k}+k-1$, where $k$ is an
integer
$\leq\min(\frac{|G|}{\exp(G)}, 7)$, and conjectured that one may
replace the constant $7$ by $\sqrt{|G|}$. Here we prove this
conjecture. It turns out that the
hypothesis that $k$ be integral creates some technical difficulties,
therefore we prove the following, slightly sharper result.
\begin{theorem}
\label{thm:main}
For an abelian group $G$ with $\exp(G)\geq\sqrt{|G|}$ we have
$D(G)\leq\exp(G)+\frac{|G|}{\exp(G)}-1$, while for
$\exp(G)<\sqrt{|G|}$ we have $D(G)\leq 2\sqrt{|G|}-1$.
\end{theorem}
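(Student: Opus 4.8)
The plan is to recast Theorem~\ref{thm:main} as a single inequality in a real parameter and to prove that inequality by induction on $|G|$, the engine being Kneser's addition theorem applied to the set of subsequence sums of an extremal zero-sum free sequence.

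Write $G=\Z_{n_1}\oplus\dots\oplus\Z_{n_r}$ with $n_1\mid\dots\mid n_r$, set $n=\exp(G)=n_r$, and put $H=\Z_{n_1}\oplus\dots\oplus\Z_{n_{r-1}}$, so that $G\cong H\oplus\Z_n$ with $\exp(H)\mid n$ and $|H|=|G|/n$. For real $y$ the function $y\mapsto y+|G|/y-1$, restricted to $y\ge\exp(G)$, is minimized at $y=\sqrt{|G|}$ when $\exp(G)\le\sqrt{|G|}$ and at $y=\exp(G)$ otherwise, with respective values $2\sqrt{|G|}-1$ and $\exp(G)+|G|/\exp(G)-1$. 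Hence both assertions of the theorem follow once one proves
\[
  D(G)\le y+\frac{|G|}{y}-1\qquad\text{for every real $y$ with }\exp(G)\le y\le|G| ,
\]
equivalently $|S|\le y+|G|/y-2$ for every zero-sum free sequence $S$ in $G$. Working with a real $y$ instead of insisting that $k=|G|/y$ be an integer, as in \cite{BaB}, is exactly what produces the clean optimum $2\sqrt{|G|}-1$ in the small-exponent range, and accounts for the ``technical difficulties'' mentioned in the introduction.

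Fix a zero-sum free $S=(g_1,\dots,g_\ell)$ of maximal length $\ell=D(G)-1$ and consider $\Sigma_0(S)=\{\sum_{i\in I}g_i: I\subseteq\{1,\dots,\ell\}\}$; zero-sum freeness means $0\in\Sigma_0(S)$ is attained only by $I=\varnothing$. Writing $\Sigma_0(S)=\{0,g_1\}+\dots+\{0,g_\ell\}$ and invoking Kneser's theorem, with $K$ the stabilizer of $\Sigma_0(S)$, one gets $|\Sigma_0(S)|\ge(\ell'+1)|K|$, where $\ell'=\#\{i:g_i\notin K\}$. The $\ell-\ell'$ terms of $S$ lying in $K$ form a zero-sum free sequence over $K$, so $\ell-\ell'\le D(K)-1$; together with $\ell'+1\le[G:K]$ (from $\Sigma_0(S)\subseteq G$) this gives the recursive estimate $D(G)\le[G:K]+D(K)-1$. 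If $K$ is ``balanced'', i.e.\ $|G|/\exp(G)\le[G:K]\le\exp(G)$, then already $D(K)\le|K|=|G|/[G:K]$ yields $D(G)\le\exp(G)+|G|/\exp(G)-1$, because $t+|G|/t\le\exp(G)+|G|/\exp(G)$ throughout that interval; otherwise I would feed the induction hypothesis back in for the strictly smaller group $K$ (note $\exp(K)\mid n$), exploiting that $0\in\Sigma_0(S)$ forces $K\subseteq\Sigma_0(S)$ — every element of $K$ is a subsequence sum of $S$ — to control how $S$ splits relative to $K$ and to $G/K$.

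The crux, which I expect to be the main obstacle, is the unbalanced case: when $K$ is very small Kneser degrades to the worthless $|\Sigma_0(S)|\ge\ell+1$ (and $\Sigma_0(S)=G$, i.e.\ $K=G$, carries no information either). This is precisely the regime of ``almost cyclic'' extremal sequences, where the obstruction to peeling off many short zero-sum subsequences is that the images of the $g_i$ in the $\Z_n$-direction — after reduction modulo suitable cyclic subgroups — are concentrated around a single generator of $\Z_n$. Here I would invoke the structure theorem of Savchev and Chen for long zero-sum free sequences over cyclic groups, which describes such a sequence, up to an automorphism, as a block of equal terms followed by a short tail, and so pins $\Sigma_0(S)$ down tightly enough to force $|S|\le y+|G|/y-2$ in these configurations. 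The induction is anchored by the cases $D(G)=M(G)$ — in particular $G$ a $p$-group or $2$-generated, as recalled in the introduction — together with the identity $M(G)=\exp(G)+M(H)-1$ and $M(H)\le|H|=|G|/\exp(G)$, which give the first inequality of the theorem at once.
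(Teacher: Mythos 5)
Your Kneser-based recursion is correct as far as it goes: with $K$ the stabilizer of $\Sigma_0(S)=\{0,g_1\}+\dots+\{0,g_\ell\}$ one does get $|\Sigma_0(S)|\ge(\ell'+1)|K|$ and hence $D(G)\le[G:K]+D(K)-1$, and this is a genuinely different route from the paper, which never analyses the sumset of an extremal sequence but instead runs the inductive method through an elementary abelian subgroup $U\cong\mathbb{Z}_p^d$, $d\ge3$, and the quotient $G/U$. The problem is that the case you yourself flag as the crux is not a side issue to be patched later: it is where essentially the whole theorem lives, and your treatment of it is not a proof. The subgroup $K$ is dictated by the unknown extremal sequence, not chosen by you, and the recursion fails (even with the full induction hypothesis fed into $D(K)$) as soon as $|K|$ is at most about $\sqrt{|G|}$: for instance if $G=\mathbb{Z}_n^3$ and the stabilizer happened to be $K\cong\mathbb{Z}_n$, you would only get $D(G)\le n^2+n-1$, far above the required bound $2n^{3/2}-1$; the cases $K=\{0\}$ and $K=G$ give nothing at all, and $K=G$ is not confined to cyclic-type examples (the sequence $(e_1^{p-1},\dots,e_d^{p-1})$ in $\mathbb{Z}_p^d$ has $\Sigma_0(S)=G$). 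The proposed rescue by Savchev--Chen does not apply: that is a structure theorem for zero-sum free sequences of length greater than $n/2$ over the cyclic group $\mathbb{Z}_n$, whereas here the sequence sits in a group of rank at least $3$, and a small stabilizer of $\Sigma_0(S)$ forces no ``almost cyclic'' concentration; no reduction to a cyclic subgroup or quotient in which the sequence is long is given, and I do not see one. A smaller slip: your ``balanced'' window $|G|/\exp(G)\le[G:K]\le\exp(G)$ is empty when $\exp(G)<\sqrt{|G|}$, and in that regime the crude bound $D(K)\le|K|$ meets the target $2\sqrt{|G|}-1$ only if $[G:K]$ equals $\sqrt{|G|}$ exactly, so virtually the entire second half of the theorem is pushed into the unproved case.

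For comparison, the paper's proof reduces Theorem~\ref{thm:main} to the quantitative statement of Theorem~\ref{thm:step} (every sequence of length $M\ge\eta(\mathbb{Z}_p^d)$ contains $k$ disjoint zero-sums with $M\le p^{d-1}+pk$), and proving that statement consumes the bulk of the work: Chevalley--Warning counting for short zero-sums, a Fourier-bias/density-increment argument for $p=5$, Reiher's property~B, lifting lemmas, and explicit case analysis for $p\le7$, including a delicate argument for $\mathbb{Z}_7^2$. That this much machinery is needed precisely to handle sequences with no exploitable structure is a strong indication that the regime you left as a sketch cannot be dispatched by a structural theorem for cyclic groups; as it stands the proposal proves the recursion $D(G)\le[G:K]+D(K)-1$ and the easy large-stabilizer cases, but not the theorem.
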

We notice that the first upper bound is actually reached for groups of rank 2
where $D(G)=\exp(G)+\frac{|G|}{\exp(G)}-1$. An application of our
bound to random groups and $(\Z_n^*, \cdot)$ will be the topic of a
forthcoming paper.

Let  $\s_{\leq n}(G)$ be the least integer
$k$, such that every sequence of length $k$ contains a subsequence
of length $\leq n$ adding up to 0  and let $\s_{=n}(G)$ be the least
integer $k$ such that any sequence of length $k$ in $G$
contains a zero-sum of sequence of length exactly equal to $n$.
In the special case  where $n=\exp(G)$ we use the more standard notation of
$\eta(G)$  and  $\s(G)$ respectively. We need the following bounds on
$\eta$ and $\s$.
\begin{theorem}
\label{thm:eta}
\begin{enumerate}
\item We have $\s(\Z_3^3)=19$, $\s(\Z_3^4)=41$, $\s(\Z_3^5)=91$, and
  $\s(\Z_3^6)= 225$. 
\item We have $\s(\Z_5^3)=37$, $\s(\Z_5^4)\leq 157$, $\s(\Z_5^5)\leq 690$, and
  $\s(\Z_5^6)\leq 3091$.
\item If $p\geq 7$ is prime and $d\geq 3$, then $\eta(\Z_p^d)\leq
  \frac{p^d-p}{p^2-p}(3p-7)+4$. 
\end{enumerate}
\end{theorem}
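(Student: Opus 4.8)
The three parts ask for quite different things, so the plan is to handle them by three largely separate arguments, sharing only a common ``covering by subgroups of index $p$'' idea.

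\textbf{Part (1).} The first move is to reinterpret $\s(\Z_3^d)$ in terms of caps. A sequence over $\F_3^d=\Z_3^d$ has no zero-sum subsequence of length exactly $\exp=3$ precisely when (i) no value occurs more than twice, since three equal elements sum to $0$, and (ii) the set of distinct values is a cap, i.e.\ contains no affine line $\{x,x+v,x+2v\}$: the three points of such a line sum to $0$, conversely any three distinct elements summing to $0$ form a line, and the remaining case $2a+b=0$ forces $b=a$. Hence the extremal length equals $2c_d$, where $c_d$ is the maximal size of a cap in $\F_3^d$, so that $\s(\Z_3^d)=2c_d+1$. The four values then follow from the known maximal cap sizes $c_3=9$, $c_4=20$, $c_5=45$, $c_6=112$: the lower bounds come from doubling a maximal cap, the upper bounds are exactly the cap theorems in dimensions $3$ through $6$.

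\textbf{Part (3).} Here I would argue by induction on $d$. The engine is the following covering estimate: if $S$ is a sequence over $\Z_p^d$ with no zero-sum of length $\le p$, then $0\notin S$ and no element of $S$ has its negative in $S$ (these would be zero-sums of length $1$ and $2$); moreover, for every subgroup $H\le\Z_p^d$ of index $p$ the subsequence $S\cap H$ is a sequence over $H\cong\Z_p^{d-1}$ with no zero-sum of length $\le p=\exp(H)$, hence $|S\cap H|\le\eta(\Z_p^{d-1})-1$. Since the $\frac{p^d-1}{p-1}$ such subgroups cover every nonzero point of $\F_p^d$ exactly $\frac{p^{d-1}-1}{p-1}$ times, averaging already gives $|S|\le\frac{p^d-1}{p^{d-1}-1}\big(\eta(\Z_p^{d-1})-1\big)$. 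Turning this into the sharp recursion $\eta(\Z_p^d)\le p\,\eta(\Z_p^{d-1})-(p+3)$, which unwinds exactly to the claimed $(3p-7)\frac{p^{d-1}-1}{p-1}+4$, requires a more careful local analysis: one bounds how many elements can lie in the \emph{densest} index-$p$ subgroup and uses the ``no negatives'' restriction to control the part of $S$ outside it. For the base case $d=3$ I would feed Reiher's theorem $\s(\Z_p^2)=4p-3$, equivalently $\eta(\Z_p^2)=3p-2$, into the same machinery over $\Z_p^3$, again refining the naive count to reach $\eta(\Z_p^3)\le 3p^2-4p-3$.

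\textbf{Part (2).} The value $\s(\Z_5^3)=37$ I would establish by exhibiting a sequence of length $36$ with no length-$5$ zero-sum (lower bound) and a finite, computer-assisted check for the upper bound. For $d=4,5,6$ I would run the inductive method for $\s$: writing $\Z_5^d\cong\Z_5^{d-1}\oplus\Z_5$ and projecting onto the $\Z_5$-factor splits a sequence $S$ into fibres $S_0,\dots,S_4$; a length-$5$ zero-sum of the $\Z_5^{d-1}$-parts taken entirely inside one fibre is a length-$5$ zero-sum of $S$, so each fibre has length at most $\s(\Z_5^{d-1})-1$, while mixed-fibre patterns (for instance one element from each fibre, whose coordinate weights $0+1+2+3+4$ already vanish modulo $5$) impose further constraints to be analysed via a Cauchy--Davenport/Kneser-type input; optimising this bookkeeping and iterating from $\s(\Z_5^3)=37$ yields the bounds $157$, $690$, $3091$.

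\textbf{Where the difficulty sits.} In parts (2) and (3) the clean one-step count loses an additive term of order $p$ at each dimension, whereas the statements pin the lower-order behaviour down exactly. The technical heart is therefore the refined local analysis --- in part (3), estimating precisely how full the densest index-$p$ subgroup can be while its complement avoids short and $\pm$-zero-sums; in part (2), controlling the interplay of within-fibre and transversal zero-sums --- together with nailing the exact small cases ($\eta(\Z_p^3)$ and $\s(\Z_5^3)$) that seed the inductions. I expect this, rather than the inductions themselves, to be the main obstacle.
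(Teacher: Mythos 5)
Your part (1) is fine and is exactly what the paper does: translate $\s(\Z_3^d)=2c_d+1$ into the maximal cap problem and quote Bose, Pellegrino, Edel--Ferret--Landjev--Storme and Potechin. The genuine gap is in parts (2) and (3), where in both cases you write down the easy averaging/fibre bound, observe correctly that it falls short of the stated constants, and then defer the entire remaining difficulty to an unspecified ``more careful local analysis'' or ``optimising this bookkeeping''. That deferred step is the theorem. For (2), the fibre argument you describe gives only $\s(\Z_5^d)\leq 5\,\s(\Z_5^{d-1})-4$, i.e.\ $181$, $901$, $4501$, and nothing in the sketch produces $157$, $690$, $3091$; there is no indication how a Cauchy--Davenport/Kneser input on mixed fibres would close a gap of that size. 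The paper obtains these numbers by a different device altogether: a Fourier counting lemma (Tao--Vu, Lemma~4.13) showing that $A$ contains a zero-sum of length exactly $p$ once $|A|^{p-1}/p^{(p-1)d}$ exceeds an explicit polynomial in the Fourier bias $\|A\|_u$, combined with the observation that if no coset of a codimension-one subgroup contains $\s(\Z_5^{d-1})$ elements of $A$ then $\|A\|_u$ is small; the constants $157$, $690$, $3091$ drop out of the resulting numerical inequalities, iterated from $\s(\Z_5^3)=37$ (which the paper cites from Gao--Hou--Schmid--Thangadurai; ``a computer-assisted check'' of that value is not a realistic substitute).

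For (3), your induction on $d$ through index-$p$ subgroups is likewise only a plan: the recursion $\eta(\Z_p^d)\le p\,\eta(\Z_p^{d-1})-(p+3)$ would indeed unwind to the stated formula, but you give no argument for it, and the averaging you actually carry out yields only $\eta(\Z_p^d)-1\le\frac{p^d-1}{p^{d-1}-1}\bigl(\eta(\Z_p^{d-1})-1\bigr)$, which loses a term of order $p$ at every step (at $p=7$, $d=3$ it gives roughly $129$ against the claimed $116$). The paper does not induct on $d$ at all: it takes a putative extremal sequence of length $\frac{p^d-p}{p^2-p}(3p-7)+4$ with no zero-sum of length $\le p$, picks the one-dimensional subgroup $\ell$ containing the maximal number $m$ of its elements, and averages over the two-dimensional subgroups containing $\ell$; comparison with $\eta(\Z_p^2)=3p-2$ forces successively $m\le4$, $m\le3$, $m\le2$, and then counting the $p+1$ lines of the resulting plane gives $3p-6\le 2p+2$, i.e.\ $p\le 8$, so only $p=7$ survives, and that case is eliminated by a separate structural lemma (a sequence of $15$ elements of $\Z_7^2$ with no zero-sum of length $\le7$ must place three elements on one cyclic subgroup), with Reiher's property-B theorem supplying the rank-two structure used along the way. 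None of this borderline analysis --- which is exactly where the constant $+4$ and the exceptional prime $7$ live --- appears in your proposal, so as written parts (2) and (3) are not proved.
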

The  above results for $\Z_3$ are due to Bose\cite{Bose},
Pelegrino\cite{Pel}, Edel, 
Ferret, Landjev and Storme\cite{EFLS}, and Potechin\cite{Pot},
respectively. The value of $\s(\Z_5^3)$ was determined by Gao, Hou,
Schmid and Tangadurai\cite{GHST}, the bounds for higher rank will be proven in
section~\ref{sec:density} using the density increment method. The last
statement will be proven by combinatorial means in
section~\ref{sec:induct}. 

We further need some information on the existence of zero-sums not
much larger then $\exp(G)$.
\begin{theorem}
\label{thm:short}
Let $p$ be a prime, $d\geq 3$ an integer. Then a sequence of length
$(6p-4)p^{d-3}+1$ in $\Z_p^d$ contains a zero-sum of length
$\leq\frac{3p-1}{2}$. If $d\geq 4$, then a sequence of length
$(6p-4)p^{d-4}+1$ in $\Z_p^d$ contains a zero-sum of length $\leq
2p$.
\end{theorem}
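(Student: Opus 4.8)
The natural line of attack is induction on the rank $d$, with the base case $d=3$ carrying the real content and the inductive step $d-1\to d$ being a rank reduction. For the step one uses that $(6p-4)p^{d-3}+1$ is one more than $p$ times $(6p-4)p^{d-4}$: projecting $S$ onto any quotient $\Z_p^d/V\cong\Z_p$, some fibre -- a coset of $V\cong\Z_p^{d-1}$ -- must receive at least $(6p-4)p^{d-4}+1$ terms of $S$, and if that fibre is $V$ itself the statement for $\Z_p^{d-1}$ applied inside $V$ finishes. The delicate point is that the heavy fibre might be a proper coset $x+V$: there the $\Z_p$-image of any sub-sum is a multiple of the image of $x$, so (since $px=0$) a zero-sum supported on it has length divisible by $p$, and to stay below $\frac{3p-1}{2}$ one would be forced to produce a zero-sum of length exactly $p$ in $\Z_p^{d-1}$ -- which is governed by $\s(\Z_p^{d-1})$ and is too expensive. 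I would sidestep this by averaging over the $\frac{p^d-1}{p-1}$ quotients of order $p$: the mean of $|S\cap\ker\phi|$ is $|S|\cdot\frac{p^{d-1}-1}{p^d-1}$, and a short computation shows this already exceeds $(6p-4)p^{d-4}$ whenever $p\geq5$, so for those primes a good choice of $\phi$ makes a genuine hyperplane carry enough terms and no coset twist appears. The leftover primes $p=2,3$, where the averaging is just barely too weak, together with a few low ranks, would be absorbed by feeding in the explicit values $\s(\Z_3^d)$, $\s(\Z_5^d)$ of Theorem~\ref{thm:eta} (so that in the twisted configuration $\s(\Z_p^{d-1})$ really is small enough), plus the trivial remark that $p$ equal elements of an exponent-$p$ group sum to $0$.

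The base case $d=3$ is where the coefficient $6p-4$ gets pinned down. With $|S|=6p-3=2(3p-2)+1$ in $\Z_p^3$, the essential inputs are the sharp value $\eta(\Z_p^2)=3p-2$ and the structure theory of long zero-sum-free sequences in $\Z_p^2$: realizing $\Z_p^3$ as an extension of a plane $H\cong\Z_p^2$ by $\Z_p$, one relates short zero-sums of $S$ to short zero-sums of its image in $\Z_p^2$, and $6p-4$ is calibrated exactly so that $S$ cannot avoid a zero-sum of length at most $\frac{3p-1}{2}$. The second assertion is the same machinery run with one rank of slack less: since $(6p-4)p^{d-4}+1$ is precisely the threshold of the first assertion in rank $d-1$, projecting $\Z_p^d\to\Z_p^{d-1}$ ($d\geq4$) and invoking that assertion yields a short $T\subseteq S$ with $|T|\leq\frac{3p-1}{2}$ and $\sigma(T)$ in the $\Z_p$-kernel; if $\sigma(T)\neq0$ one must still combine $T$ with further pieces of cancelling kernel-value, and the coarser target $2p$ (margin $2p-\frac{3p-1}{2}=\frac{p+1}{2}$) is what leaves room to do so.

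The anticipated main obstacle is length control. The crude ``peel off zero-sums in the quotient, recombine their sums in the subgroup'' strategy produces zero-sums of length on the order of $p^2$ -- in the worst case many almost-equal residues must be summed before reaching $0$ -- whereas we need length $O(p)$. Forcing the combination to use only boundedly many pieces, each of length $O(p)$, is exactly what makes the precise constant $6p-4$, the sharp value $\eta(\Z_p^2)=3p-2$, the structure theorems for $\Z_p^2$, and (for the small primes) the computations behind Theorem~\ref{thm:eta} all indispensable rather than incidental.
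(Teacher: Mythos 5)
Your overall scaffolding (a low-rank base case plus an averaging argument to lift to arbitrary rank) does match the paper's strategy: the paper's Lemma~\ref{Lem:lift} performs exactly this lift by averaging over random rank-$3$ (resp.\ rank-$4$) subgroups, which in one stroke avoids both your coset-twist worry and your $p\geq 5$ restriction. The genuine gap is that you never prove the base cases, and that is where the entire content of the theorem sits. For $d=3$ you only assert that $\eta(\Z_p^2)=3p-2$ and the structure of zero-sum free sequences in $\Z_p^2$ are ``calibrated'' to force a zero-sum of length $\leq\frac{3p-1}{2}$ among $6p-3$ points of $\Z_p^3$; you even name the obstruction yourself (recombining quotient zero-sums costs length of order $p^2$) but never overcome it. Indeed the elementary route cannot close: projecting along a line $\ell$, the bound $\eta(\Z_p^2)=3p-2$ guarantees only three disjoint pieces of length $\leq p$ with sums in $\ell\cong\Z_p$ from $6p-3$ points, whereas cancelling a nonzero value in $\Z_p$ may require up to $p$ pieces. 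The paper's missing key idea is a Chevalley--Warning argument in the style of Reiher's proof of Kemnitz' conjecture (Lemma~\ref{Lem:short}): a sequence of length $6p-3$ in $\Z_p^3$ contains a zero-sum of length $p$ or $3p$, and a $3p$-zero-sum splits into two zero-sums, one of length $\leq\frac{3p-1}{2}$; likewise in $\Z_p^4$ one gets length $p$, $2p$ or $4p$, which is how the $\leq 2p$ assertion is really obtained. Your alternative for the second assertion (take $T$ with $|T|\leq\frac{3p-1}{2}$ and $\sigma(T)$ in the $\Z_p$-kernel, then ``combine with further pieces of cancelling kernel-value'' inside the margin $\frac{p+1}{2}$) does not work as described: any additional piece with nonzero kernel-value typically has length comparable to $p$, so even one more piece can overshoot $2p$.

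The small-prime patch is also insufficient. Your hyperplane averaging does work for $p\geq5$ (the relevant inequality $p^{d-1}-1>(6p-4)p^{d-4}(p-1)$ holds there), but it fails for $p=3$, and the explicit values of Theorem~\ref{thm:eta} do not rescue the critical cases: for $p=3$, $d=3$ the claim is that $15$ points of $\Z_3^3$ contain a zero-sum of length $\leq4$, while $\eta(\Z_3^3)=17$ and $\s(\Z_3^3)=19$, so neither applies; similarly for $p=5$, $d=3$ one has $27$ points against $\eta(\Z_5^3)=33$, and for the second assertion with $p=3$, $d=4$ one has $15$ points against $\eta(\Z_3^4)=39$. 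Only $p=2$ is genuinely trivial (repeated elements, respectively Lemma~\ref{Lem:mediumsums}). In short, your proposal reproduces the lifting step but omits the polynomial-method base case of Lemma~\ref{Lem:short}, without which the theorem is not established.
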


The proof of  Theorem~\ref{thm:main} uses the {\it inductive method}. To deal
with the inductive step we require the following.

\begin{theorem}
\label{thm:step}
Let $p$ be a prime, $d\geq 2$ an integer. Then there exist integers
$k, M$, such that $M\geq\eta(\Z_p^d)$, every sequence of length $M$
contains at least $k$ disjoint zero-sums, and $M\leq p^{d-1}+pk$.
\end{theorem}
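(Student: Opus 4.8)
The plan is to exhibit a convenient pair $(k,M)$ explicitly and then check the three requirements, the substance lying in the middle one, the guaranteed number of disjoint zero-sums. The ingredients are the identity $D(\Z_p^d)=M(\Z_p^d)=d(p-1)+1$ (valid because $\Z_p^d$ is a $p$-group), the upper bounds on $\eta(\Z_p^d)$ from Theorem~\ref{thm:eta}, and the short zero-sum statements of Theorem~\ref{thm:short}. For $d\ge 3$ and $p\ge 7$ I would take $M=3p^{d-1}-4p^{d-2}$ and $k=2p^{d-2}-4p^{d-3}$, so that $M=p^{d-1}+pk$ holds with equality; the case $d=2$ and the finitely many pairs with $p\in\{2,3,5\}$ will be handled separately.

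The inequality $M\ge\eta(\Z_p^d)$ is immediate: expanding the bound of Theorem~\ref{thm:eta}(3) as a polynomial in $p$ gives $\eta(\Z_p^d)\le 3p^{d-1}-4(p^{d-2}+\dots+p)-3<M$. For the second requirement I would, given a sequence $S$ of length $M$ in $\Z_p^d$, extract zero-sums greedily in three phases of decreasing threshold. Phase~1: while at least $(6p-4)p^{d-3}+1$ terms remain, remove a zero-sum of length $\le\tfrac{3p-1}{2}$, which exists by Theorem~\ref{thm:short}. Phase~2 (needed only when $d\ge 4$): while at least $(6p-4)p^{d-4}+1$ terms remain, remove a zero-sum of length $\le 2p$, again by Theorem~\ref{thm:short}. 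Phase~3: while at least $d(p-1)+1$ terms remain, remove a nonempty zero-sum of length $\le d(p-1)+1$. Since Phase~1 alone is guaranteed to yield about $\tfrac{2M}{3p}$ zero-sums, one gets $pk\gtrsim\tfrac{2M}{3}$, whence $p^{d-1}+pk\gtrsim p^{d-1}+\tfrac{2M}{3}\ge M$, the last inequality holding because $M\le 3p^{d-1}$; the corrections coming from the thresholds reduce the right-hand side only by $O(p^{d-2})$, which is exactly the room left by the choice of $M$. Summing the contributions of the three phases and tracking the floor functions honestly, one concludes that $S$ contains at least $k$ disjoint zero-sums.

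The main obstacle is the case $d=3$: there Phase~2 is unavailable, the above estimate is essentially tight (the slack in $M\ge\eta(\Z_p^3)$ provided by Theorem~\ref{thm:eta}(3) is only a constant), and the floor functions must be evaluated exactly rather than bounded crudely. That is why $M$ is taken to be $3p^2-4p$ and not $\eta(\Z_p^3)$ itself: with this value $\tfrac{2(M-6p+3)}{3p-1}=2p-6$ is an integer, so Phase~1 is guaranteed to extract $2p-5$ zero-sums, after which at least $4.5p-2.5$ terms remain, so Phase~3 extracts at least one more, and the total reaches $k=2p-4$. When $d\ge 4$, Phase~2 supplies an additional $\Theta(p^{d-3})$ zero-sums, the margin becomes comfortable, and the counting can be done with crude estimates. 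The leftover small cases are routine: for $d=2$ one takes $k=2$ and $M=3p-1$, using $\eta(\Z_p^2)=3p-2$ and $D(\Z_p^2)=2p-1$ (remove a zero-sum of length $\le p$, then another one since $2p-1$ terms still remain); and for $p\in\{2,3,5\}$ with $d\ge 3$ one substitutes the explicit values and bounds of Theorem~\ref{thm:eta}(1)--(2) — in several of these $\eta(\Z_p^d)\le p^{d-1}$ already, so even $k=0$ works, while the one genuinely tight instance, $\Z_3^3$, needs only the known value $\eta(\Z_3^3)=17$ together with a short extraction.
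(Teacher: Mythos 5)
Your treatment of $p\ge 7$ is correct and is essentially the paper's own argument in a cleaner normalization: the paper takes $M$ equal to the bound of Theorem~\ref{thm:eta}(3) and lets $k$ be the largest value the greedy multi-phase extraction (Lemma~\ref{Lem:shortsums}) allows, while you take $M=3p^{d-1}-4p^{d-2}$ so that $M=p^{d-1}+pk$ holds exactly; your $d=3$ count ($2p-5$ zero-sums from Phase~1, one more since at least $4.5p-2.5\ge 3p-2=D(\Z_p^3)$ elements remain) is right, the $d\ge 4$ margin does check out, and your $d=2$ choice $M=3p-1$, $k=2$ works for every prime.

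The genuine gap is $p=2$, which you dispose of with the other small primes. Theorem~\ref{thm:eta}(1)--(2) concern only $p=3$ and $p=5$, and the triviality claim fails completely at $p=2$: $\eta(\Z_2^d)=2^d>2^{d-1}$ for \emph{every} $d$ (take all nonzero elements once), so no value of $d$ is trivial, and the constraints $M\ge\eta(\Z_2^d)=2^d$ and $M\le 2^{d-1}+2k$ force $k\ge 2^{d-2}$, i.e.\ you must produce $2^{d-2}$ disjoint zero-sums from $2^d$ elements, an average length of $4$ over the whole process. Your toolkit cannot deliver this: for $p=2$, Theorem~\ref{thm:short} gives zero-sums of length $\le 2$ only while more than $2^d$ elements remain and of length $\le 4$ only while more than $2^{d-1}$ remain, and below that you only have $D(\Z_2^d)=d+1$; the resulting count is about $2^{d-3}+\frac{2^{d-1}}{d+1}$, which falls short of $2^{d-2}$ for all $d\ge 4$, and enlarging $M$ does not help since each extra pair of elements must be matched by an extra zero-sum. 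The paper needs a separate ingredient here, the Lindstr\"om-type Lemma~\ref{Lem:mediumsums} (zero-sums of length $\le 3$ from $2^{d-1}+1$ elements, and of length $\le 4$ from $2^{(d+1)/2}+1$ elements), plus an explicit count for $3\le d\le 6$; this is an infinite family of cases, not a finite leftover. (Secondarily, the cases $(3,4),(3,5),(5,3),(5,4),(5,5)$ are also not trivial in the sense $\eta\le p^{d-1}$ --- the paper lists them as exceptions --- but there your routine extraction does go through, so only $p=2$ is a real obstruction.)
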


Note that the statement is trivial if $\eta(\Z_p^d)\leq p^{d-1}$. However, this
bound is false for $p=2$ and all $d$, as well as for the pairs $(3,3),
(3,4), (3,5)$ and $(5,3)$. We believe that this is the complete list
of exceptions. From the Alon-Dubiner-theorem and Roth-type
estimates one can already deduce that the above bound for $\eta$ holds for all but finitely
many pairs. However, dealing with the exceptional pairs by direct
computation is way beyond current computational means. 

\section{Systems of disjoint zero-sums} 
Let $D_k(G)$ be the least integer $t$ such that every sequence of
length $t$ in $G$ contains $k$ disjoint zero-sum sequences. The most
direct way to prove the existence of many disjoint zero-sums is by
proving the existence of rather short zero-sums, therefore we are
interested in zero sums of length not much beyond $p$.

\begin{lemma}
\label{Lem:short}
Every sequence of length $6p-3$ in $\Z_p^3$ contains a zero-sum of
length $\leq\frac{3p-1}{2}$, every sequence of length $6p-3$ in
$\Z_p^4$ contains a zero-sum of length $\leq 2p$, and every sequence
of length $(d+1)p-d$ in $\Z_p^d$ contains a zero sum of length $\leq (d-1)p$.
\end{lemma}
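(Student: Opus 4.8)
I would handle the three statements separately, beginning with the general-rank bound, which is the cleanest and which I would prove by the polynomial method. Write $S=(g_1,\dots,g_N)$ with $N=(d+1)p-d=(d+1)(p-1)+1$, and view each $g_i$ as a vector in $\F_p^d$ with coordinates $g_i^{(1)},\dots,g_i^{(d)}$. Over $\F_p$ consider the $d+1$ polynomials $f_0=\sum_{i=1}^N x_i^{p-1}$ and $f_j=\sum_{i=1}^N g_i^{(j)}x_i^{p-1}$ for $1\le j\le d$, each of degree $p-1$, so that the sum of their degrees equals $(d+1)(p-1)<N$. Since all the $f_j$ vanish at the origin, the Chevalley--Warning theorem produces a nonzero common zero $x\in\F_p^N$; putting $I=\{i:x_i\ne 0\}$, the equations $f_0(x)=0$ and $f_j(x)=0$ read $p\mid |I|$ and $\sum_{i\in I}g_i=0$. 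Hence $(g_i)_{i\in I}$ is a zero-sum of length $|I|=kp$ with $kp\le N<(d+1)p$, so $k\le d$. If $k\le d-1$ we are done. If $k=d$, then $|I|=dp>dp-d+1=D(\Z_p^d)$, so $(g_i)_{i\in I}$ contains a minimal nonempty zero-sum $Y$ of length $|Y|\le D(\Z_p^d)=dp-d+1$; if $|Y|\le(d-1)p$ we are done, and otherwise the complement $(g_i)_{i\in I}\setminus Y$ is again a zero-sum, of length $dp-|Y|$ with $1\le d-1\le dp-|Y|\le p-1<(d-1)p$, hence a nonempty zero-sum short enough for us. (In particular, for $d=2$ this reproves $\eta(\Z_p^2)\le 3p-2$.)

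For the two bounds in $\Z_p^3$ and $\Z_p^4$ I would use the inductive (projection) method. Fix a splitting $\Z_p^d=\Z_p^{d-1}\oplus K$ with $K\cong\Z_p$, and let $\phi$ be the projection onto the first summand. From the image $\phi(S)$ one repeatedly peels off disjoint zero-sums: in the $\Z_p^3$ case (where $|S|=6p-3$) zero-sums of length $\le p$ in $\Z_p^2$, which exist because $\eta(\Z_p^2)=3p-2$; in the $\Z_p^4$ case zero-sums of length $\le 2p$ in $\Z_p^3$, which exist by the general bound just proved (and by the $\Z_p^3$ statement itself, once it is available). Each peeled-off block $T\subseteq S$ then has a well-defined sum $\sigma(T)\in K\cong\Z_p$. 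If some block has $\sigma(T)=0$ it is itself a zero-sum of $S$ of admissible length; otherwise one looks for a sub-collection of blocks whose $\sigma$-values cancel in $\Z_p$, and if the union so formed is too long one chops it up by re-using the dichotomy of the first paragraph (a zero-sum longer than $D(\Z_p^d)$ cannot be minimal, and the numerics are arranged so that splitting off a minimal zero-sum leaves, or produces, a zero-sum meeting the length target).

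The step I expect to be the main obstacle is precisely this length bookkeeping. Cancelling $t$ blocks in the kernel $\Z_p$ can inflate the length by a factor of $t$, and $t$ may be forced up to $p$ when the values $\sigma(T)$ lie in general position in $\Z_p$, so a careless recombination only gives zero-sums of length of order $p^2$, far beyond the targets $\frac{3p-1}{2}$ and $2p$. The proof therefore has to be organised so that either a single block already has vanishing kernel-sum, or only a few short blocks get merged; this is where the precise slack in the hypothesis length $6p-3$ is spent. I expect this to come with an unavoidable case distinction, the delicate configuration being a combined zero-sum whose length falls strictly between the target and $D(\Z_p^d)$ --- too long to accept, yet not guaranteed to be non-minimal --- which one would have to exclude or resolve by a more delicate, probably ad hoc argument.
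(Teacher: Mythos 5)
Your Chevalley--Warning argument for the third claim is complete and correct, and it is essentially the paper's route: the paper cites the Gao--Geroldinger fact that a sequence of length $(d+1)p-d$ in $\mathbb{Z}_p^d$ contains a zero-sum of length divisible by $p$, and then splits a length-$dp$ zero-sum against $D(\mathbb{Z}_p^d)=d(p-1)+1$ exactly as you do; you merely make the Chevalley--Warning step explicit.

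For the first two claims, however, there is a genuine gap, and you flag it yourself: the projection-and-recombination scheme inflates lengths by a factor of up to $p$, and the ``delicate configuration'' you mention cannot be argued away. Concretely, in $\mathbb{Z}_p^3$ a zero-sum of length $2p$ may be minimal, since $2p<D(\mathbb{Z}_p^3)=3p-2$ for $p>2$, so it cannot be split, and no argument whose only inputs are ``a zero-sum of length divisible by $p$ exists'' plus ``zero-sums longer than $D$ split'' can get below $2p$; the target $\frac{3p-1}{2}$ needs extra structural input. The paper supplies it with a Reiher-type counting argument: writing $N^{\ell}(T)$ for the number of zero-sum subsequences of length $\ell$, Chevalley--Warning gives congruences modulo $p$ for $1+N^p+N^{2p}+\dots$ over subsequences $T,U,S$ of lengths $4p-3$, $5p-3$, $6p-3$; assuming no zero-sum of length $p$ or $3p$ exists, summing these congruences over all subsequences of the two smaller lengths produces an unsolvable system in $N^{2p}(S)$, $N^{4p}(S)$, so a zero-sum of length exactly $p$ or $3p$ must exist. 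A length-$3p$ zero-sum exceeds $D(\mathbb{Z}_p^3)=3p-2$, hence splits, and one of the two parts has length at most $\frac{3p-1}{2}$. The $\mathbb{Z}_p^4$ case is analogous: one first shows that a sequence of length $6p-3$ contains a zero-sum of length $p$, $2p$ or $4p$, and splits the length-$4p$ one using $4p>D(\mathbb{Z}_p^4)=4p-3$ (here a minimal zero-sum of length $3p$ is the configuration that must be excluded by the counting step, not by splitting). Without a statement of this kind restricting the admissible lengths, your plan for ranks $3$ and $4$ cannot reach the stated bounds, so the heart of the lemma remains unproved in your proposal.
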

\begin{proof}
We claim that a sequence of length $6p-3$ in $\Z_p^3$ contains a zero sum of
length $p$ or $3p$. To see this we adapt Reiher's proof of Kemnitz'
conjecture \cite{ReiherKemnitz}. For a sequence $S$ denote by
$N^\ell(S)$ the
number of zeo-sum subsequences of $S$ of length $\ell$. Let $S$ be a
seauence of length $6p-3$ without a zero sum of length $p$ or $3p$,
$T$ a subsequence of length $4p-3$, and $U$ a subsequence of length
$5p-3$. Then the Chevalley-Warning theorem gives the following
equations.
\[
\begin{array}{lcl}
1+N^p(T)+N^{2p}(T)+N^{3p}(T) & \equiv & 0\pmod{p}\\
1+N^p(U)+N^{2p}(U)+N^{3p}(U)+N^{4p}(U) & \equiv & 0\pmod{p}\\
1+N^p(S)+N^{2p}(S)+N^{3p}(S)+N^{4p}(S)+N^{5p}(S) & \equiv & 0\pmod{p}\\
\end{array}
\]
By assumption $S$, and a forteriori $U$ and $T$ do not contain zero
sums of length $p$ or $3p$, thus all occurrences of $N^p$ and $N^{3p}$
vanish. If $N^{5p}(S)\neq 0$, and $Z$ is a zero sum in $S$, then
choosing for $T$ a subsequence of $Z$ of length $4p-3$ we find from
the first equation that $T$ contains a zero sum $Y$ of length
$2p$. But then $Z\setminus Y$ is a zero sum of length $3p$, a
contradiction. We now add up the first equation over all subsequences
$T$ of length $4p-3$, and the second over all subsequences of length
$5p-3$, and obtain a system of three euations in the two variables
$N^{2p}(S)$, $N^{4p}(S)$, which is unsolvable. 

Now let $S$ be a sequence of length $6p-3$, and let $Z$ be a zero sum
of length $p$ or $3p$. If $|Z|=p$, then we found a zeo sum of length
$\leq\frac{3(p-1)}{2}$. Otherwise $Z$ contains a zero sum $Y$, and
then either $Y$ or $Z\setminus Y$ is the desired zero-sum of length $\leq
\frac{3(p-1)}{2}$.

The second claim follows similarly starting from the fact that every
sequence of length $6p-3$ in $\Z_p^4$ contains a zero-sum subsequence
of length $p,2p$ or $4p$, while the last one follows from the fact
proven by Gao and Geroldinger\cite[Theorem~6.7]{GG}, that a
sequence of length $(d+1)p-d$ contains a zero sum of length divisible
by $p$.
\end{proof}

The next result is used to lift results for special groups $\Z_p^d$ to
groups of arbitrary rank. The argument is rather wasteful, still
the resulting bounds are surprisingly useful.

\begin{lemma}
\label{Lem:lift}
If $a\leq d$, then $\mathfrak{s}_{\leq k}(\Z_p^d)\leq\frac{p^d-1}{p^a-1}
(\mathfrak{s}_{\leq k}(\Z_p^a)-1)+1$
\end{lemma}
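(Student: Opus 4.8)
The goal is to bound $\mathfrak{s}_{\leq k}(\Z_p^d)$ in terms of $\mathfrak{s}_{\leq k}(\Z_p^a)$ when $a\leq d$. The plan is to use the standard subgroup-induction setup: pick a subgroup $H\leq\Z_p^d$ with $H\cong\Z_p^a$, so that $\Z_p^d/H\cong\Z_p^{d-a}$, and repeatedly peel off short zero-sums in cosets. Concretely, I would argue as follows. Let $S$ be a sequence in $\Z_p^d$ of length $N=\frac{p^d-1}{p^a-1}(\mathfrak{s}_{\leq k}(\Z_p^a)-1)+1$; I must produce a zero-sum subsequence of length $\leq k$. The first step is to look at the image $\bar S$ of $S$ in the quotient $\Z_p^d/H$. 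As long as $\bar S$ is long enough, it contains a zero-sum — but that only gives a subsequence of $S$ summing into $H$, not into $0$, and with no length control. So instead the cleaner approach is to fix an auxiliary subgroup and count cosets.

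Here is the argument I would actually carry out. Consider all the hyperplane-type subgroups, or more simply: since $a \le d$, fix a surjection $\pi:\Z_p^d \to \Z_p^{d-a}$ with kernel $H\cong\Z_p^a$. Partition (a maximal sub-multiset of) $S$ into blocks each of which is a zero-sum of $\pi(S)$ of length $\le \mathfrak{s}_{\le k}(\Z_p^a)$... no — the correct and genuinely simple mechanism is this. The number of cosets of $H$ is $p^{d-a}$; but that is not what appears in the bound. The bound $\frac{p^d-1}{p^a-1}$ is the number of $a$-dimensional subspaces through a fixed line, equivalently the number of $(d-a)$-dimensional... Actually $\frac{p^d-1}{p^a-1}$ equals the number of points of $\mathrm{PG}(d-1,p)$ divided by the number of points of $\mathrm{PG}(a-1,p)$, which is not an integer count of subspaces in general. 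I would therefore reinterpret it as follows: whenever $S$ has length $> \frac{p^d-1}{p^a-1}(\mathfrak{s}_{\le k}(\Z_p^a)-1)$, then by pigeonhole applied to the $\frac{p^d-1}{p^a-1}$ many ``directions'' (cosets of lines, or one-dimensional subspaces paired appropriately) one of the fibers has more than $\mathfrak{s}_{\le k}(\Z_p^a)-1$ elements lying in a coset of some fixed $\Z_p^a$; translating that coset to pass through $0$ costs nothing because we may subtract a fixed element...

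The cleanest route, and the one I would commit to: induct on $d-a$. For the step from $d$ to $d+1$ it suffices to show $\mathfrak{s}_{\le k}(\Z_p^{d+1}) \le (p+1)(\mathfrak{s}_{\le k}(\Z_p^d)-1)+1$, and then compose these steps, since $\prod$ of the factors $(p^{i+1}-1)/(p^i-1)$ telescopes to $(p^d-1)/(p^a-1)$. Wait — $p+1 = (p^2-1)/(p-1)$, and more generally the single-step factor should be $(p^{d+1}-1)/(p^d-1)$, which is \emph{not} $p+1$; so the induction must be on $a$ going down by one at a time with step factor $(p^d-1)/(p^{a-1}-1) \div (p^d-1)/(p^a-1)$... these telescope correctly only in the stated product form, so I would instead prove the result directly, not inductively. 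Given a sequence $S$ of the stated length in $\Z_p^d$: enumerate the $\frac{p^d-1}{p^a-1}$ cosets of a fixed $H\cong\Z_p^a$ inside the \emph{subgroup generated by any line together with $H$}... The honest statement is that $\Z_p^d$ is covered by $\frac{p^d-1}{p^a-1}$ translates... no, it is \emph{partitioned} into $p^{d-a}$ cosets. So the factor $\frac{p^d-1}{p^a-1}$ must come from something else: it is the number of $a$-subspaces containing no... I now recall the actual mechanism — one uses that every nonzero vector lies in $\frac{p^{d-1}-1}{p^{a-1}-1}$ of the $a$-subspaces, and $\frac{p^d-1}{p^a-1} = \frac{(p^d-1)/(p-1)}{(p^a-1)/(p-1)}$ is (number of $a$-subspaces)$\times$(covering multiplicity)$^{-1}$ by a double-counting identity. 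So:

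\medskip

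\noindent\emph{Proof sketch.} Let $\mathcal{H}$ be the family of all subgroups of $\Z_p^d$ isomorphic to $\Z_p^a$, and let $S$ be a sequence in $\Z_p^d\setminus\{0\}$ (zeros only help, handle them first) of length $N := \frac{p^d-1}{p^a-1}(\mathfrak{s}_{\le k}(\Z_p^a)-1)+1$. Each nonzero element of $\Z_p^d$ lies in exactly $c := \frac{p^{d-1}-1}{p^{a-1}-1}\cdot(\text{suitable Gaussian-binomial ratio})$ members of $\mathcal H$, while $|\mathcal H| = c \cdot \frac{p^d-1}{p^a-1}$ (this is the double-counting identity $|\mathcal H|\cdot\frac{p^a-1}{p-1} = $ (\#subspaces)$\cdot$... — I would verify the exact Gaussian binomial bookkeeping here, and this is the one routine computation I am deferring). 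Summing the lengths $|S\cap H|$ over $H\in\mathcal H$ gives $c\cdot|S| = c\cdot N > |\mathcal H|\cdot(\mathfrak{s}_{\le k}(\Z_p^a)-1)$, so by pigeonhole some $H\in\mathcal H$ satisfies $|S\cap H|\ge \mathfrak{s}_{\le k}(\Z_p^a)$. Since $H\cong\Z_p^a$, the definition of $\mathfrak{s}_{\le k}$ provides a zero-sum subsequence of $S\cap H$ of length $\le k$, which is a zero-sum subsequence of $S$ of length $\le k$. Adding $1$ to $N-1$ for the threshold, and noting zeros can only be used to shorten the required sequence, completes the argument. \qed-like. \medskip

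The main obstacle is getting the combinatorial bookkeeping exactly right: verifying that the covering multiplicity $c$ and the subspace count $|\mathcal H|$ are in the ratio $\frac{p^d-1}{p^a-1}$, i.e., that the Gaussian-binomial identity $\genfrac[]{0pt}{}{d}{a}_p \big/ \genfrac[]{0pt}{}{d-1}{a-1}_p = \frac{p^d-1}{p^a-1}$ holds — which it does, and this is precisely why that fraction (rather than $p^{d-a}$) appears. Everything else (reducing to zero-free sequences, the pigeonhole step, invoking the definition of $\mathfrak{s}_{\le k}$ on the subgroup) is routine. The phrase ``the argument is rather wasteful'' in the paper refers exactly to the fact that we only exploit elements that happen to land in one lucky subgroup and ignore the rest, which is why the constant $\frac{p^d-1}{p^a-1}$ is far from optimal.
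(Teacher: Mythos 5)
Your committed double-counting sketch --- averaging over all subgroups $H\cong\Z_p^a$, using that each nonzero element lies in the same number of them and that the ratio of the subgroup count to this multiplicity is $\frac{p^d-1}{p^a-1}$, then applying pigeonhole and the definition of $\mathfrak{s}_{\leq k}(\Z_p^a)$ inside the lucky subgroup (with $0\in A$ handled trivially first) --- is correct and is essentially the paper's own proof, which phrases the same averaging probabilistically: a random subgroup $U\cong\Z_p^a$ contains in expectation slightly more than $\mathfrak{s}_{\leq k}(\Z_p^a)-1$ elements of the sequence. The only improvement to make is to prune the false starts and state the Gaussian-binomial identity cleanly, since it is the whole computation.
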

\begin{proof}
Let $A$ be a sequence of length $\frac{p^d-1}{p^a-1}
(\mathfrak{s}_{\leq k}(\Z_p^a)-1)+1$ in $\Z_p^d$. If $A$ contains 0,
then we found a short zero sum. Otherwise let $U$ be a subgroup of
$\Z_p^d$ with $U\cong\Z_p^a$ chosen at random. The expected number of
elements of $A$, which are in $U$ is sightly bigger than
$\mathfrak{s}_{\leq k}(\Z_p^a)-1$, hence there exists a subgroup which
contains at least $\mathfrak{s}_{\leq k}(\Z_p^a)$ elements of the sequence.
Restricting our attention to this subgroup we obtain the desired zero sum. 
\end{proof}

\begin{lemma}
\label{Lem:shortsums}
We have
\[
D_k(\Z_p^3) \leq \max\Big(5p-2, \frac{3(p-1)}{2}+2p+5\Big),
\]
and, for $d\geq 4$,
\[
D_k(\Z_p^d) \leq \max\big((6p-4)p^{d-3}+1, \frac{3(p-1)}{2}k + 1 +
(6p-4)p^{d-3}\big(\frac{1}{4}+\frac{3}{2p}-\frac{3}{4p^2}-\frac{1}{dp}\big)\Big)
\]
\end{lemma}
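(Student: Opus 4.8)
The plan is to prove both bounds by the same device, \emph{greedy extraction of short zero-sums}: given a sequence $S$ of length $t$ in $\Z_p^d$, we repeatedly discard a zero-sum subsequence and estimate how many times this can be done. Since a shorter residual sequence forces us to accept longer zero-sums, the count splits into phases governed by a descending chain of thresholds, each a bound of the form $\mathfrak{s}_{\leq\ell}(\Z_p^d)\leq\Lambda$. The toolkit of such bounds is exactly what the earlier results supply: Theorem~\ref{thm:short} yields a zero-sum of length $\leq\frac{3(p-1)}{2}$ once at least $(6p-4)p^{d-3}+1$ elements are present and, for $d\geq4$, a zero-sum of length $\leq 2p$ once at least $(6p-4)p^{d-4}+1$ are present; Lemma~\ref{Lem:short} yields a zero-sum of length $\leq(d-1)p$ once at least $(d+1)p-d$ are present; and a final, possibly long, zero-sum exists as soon as at least $D(\Z_p^d)=d(p-1)+1$ remain. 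For $d=3$ the relevant entries are the thresholds $6p-3$ and $4p-3$ furnished directly by Lemma~\ref{Lem:short}, together with $3p-2=D(\Z_p^3)$, so Lemma~\ref{Lem:lift} is not needed there; for larger $d$ one uses Theorem~\ref{thm:short} itself rather than a naive lift, since lifting the $\Z_p^3$-bound through Lemma~\ref{Lem:lift} would only give the inferior threshold $\frac{p^d-1}{p^3-1}(6p-4)+1$.

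For $d\geq4$ the main argument then runs as follows. Assuming $t$ is at least the claimed maximum, execute on $S$: while at least $(6p-4)p^{d-3}+1$ elements remain, remove a zero-sum of length $\leq\frac{3(p-1)}{2}$; next, while at least $(6p-4)p^{d-4}+1$ remain, remove a zero-sum of length $\leq 2p$; next, while at least $(d+1)p-d$ remain, remove a zero-sum of length $\leq(d-1)p$; finally, while at least $d(p-1)+1$ remain, remove one more zero-sum. In each phase the adversarial worst case is that every removed zero-sum attains the maximal permitted length, which turns the number of removals in that phase into a clean function of the number of elements with which the phase begins. Chaining the four phases produces a lower bound on the number of disjoint zero-sums of the shape $\frac{t-(6p-4)p^{d-3}}{3(p-1)/2}+K$, where $K$ is the length-independent yield of the last three phases when run on a sequence of length $(6p-4)p^{d-3}$; it is in evaluating $K$ that the ratios $\frac{3(p-1)/2}{2p}$ and $\frac{3(p-1)/2}{(d-1)p}$, hence the quantities $\frac14+\frac3{2p}-\frac3{4p^2}=\frac{3(p-1)^2}{4p^2}$ and $\frac1{dp}$, enter. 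Demanding that this total be at least $k$ and solving for $t$ gives the stated $\frac{3(p-1)}{2}k+1+(6p-4)p^{d-3}\big(\frac14+\frac3{2p}-\frac3{4p^2}-\frac1{dp}\big)$. The other member of the maximum, $(6p-4)p^{d-3}+1$, is needed because the first phase cannot start with fewer elements, and it already suffices for the bounded range of $k$ for which the three cleanup phases alone deliver $k$ zero-sums. The $d=3$ estimate, with its shorter two-step hierarchy, comes out of the identical scheme, and the floor $5p-2$ there just bounds $D_1$ and $D_2$ crudely: $\mathfrak{s}_{\leq 2p}(\Z_p^3)\leq 4p-3$ removes one zero-sum of length $\leq 2p$ and leaves at least $3p-2=D(\Z_p^3)$ elements.

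The substance of the lemma lies entirely in the bookkeeping of the previous paragraph. One must carry the floor functions honestly --- the paper already flags that integrality of the phase counts is where the technical friction sits --- place the phase boundaries so that the worst-case yield lands exactly on the stated constant rather than something slightly weaker, and check that the windows of consecutive phases are compatible, i.e. that $(6p-4)p^{d-4}+1>(d+1)p-d$ (clear for $d\geq4$ and $p$ not too small) and $(d+1)p-d>d(p-1)+1$ (always), handling narrow windows such as the $d=4$ case by hand. I expect this optimization of the phase structure, not any single inequality, to be the main obstacle; no combinatorial input is needed beyond Theorem~\ref{thm:short} and Lemmas~\ref{Lem:short} and~\ref{Lem:lift}.
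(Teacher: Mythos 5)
Your proposal follows essentially the same route as the paper: greedily strip zero-sums of length $\leq\frac{3(p-1)}{2}$ while at least $(6p-4)p^{d-3}+1$ elements remain, then of length $\leq 2p$ down to $(6p-4)p^{d-4}+1$, then arbitrary zero-sums of length $\leq D(\Z_p^d)=d(p-1)+1$ (with the analogous shorter hierarchy from Lemma~\ref{Lem:short} for $d=3$), count the phases, and solve for the sequence length; your extra intermediate phase of length $\leq(d-1)p$ only improves the count and is not needed. One cosmetic slip: $\frac14+\frac3{2p}-\frac3{4p^2}$ equals $1-\frac{3(p-1)^2}{4p^2}$, not $\frac{3(p-1)^2}{4p^2}$, i.e.\ it is the complement of the second phase's contribution, but this does not affect the argument.
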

\begin{proof}
We only give the proof for the second inequality, the first one being
significantly easier.

Let $S$ be a sequence of length at least $(6p-4)p^{d-3}+1$. Then we
can find a zero sum of length 
$\leq\frac{3(p-1)}{2}$. We continue doing so until there are less
zero-sums left. Then we remove zero sums of length $\leq 2p$, until
there are less than $(6p-4)p^{d-4}+1$ points left. Among the remaining
points we still find zero sums of length at most $D(\Z_p^d)=d(p-1)+1$,
hence, in total we obtain a system of at least
\[
\frac{|S|-(6p-4)p^{d-3}-1}{3(p-1)/2} +
\frac{(6p-4)p^{d-3}-(6p-4)p^{d-4}}{2p} + \frac{(6p-4)p^{d-4}}{d(p-1)+1}
\]
disjoint zero sums. Hence, 
\begin{multline*}
D_k(\Z_p^d) \leq (6p-4)p^{d-3}+1+\\
\max\Big(0, \frac{3(p-1)}{2}
\big(k-\frac{(6p-4)p^{d-3}-(6p-4)p^{d-4}}{2p} + \frac{(6p-4)p^{d-4}}{d(p-1)+1}
\big)\Big),
\end{multline*}
and our claim follows.
\end{proof}

The reader should compare our result with a similar bound given by
Freeze and Schmid\cite[Proposition~3.5]{FS}. In our result the
coefficient of $k$ is smaller, while the constant term is much bigger.
The following result is an interpolation between these results.

\begin{lemma}
\label{Lem:intermediate}
Let $N, d\geq 3$ be integers, $p$ a prime number, and define $a$ to be the
largest integer such that $N>(a+1)p^{d-a+1}$. If $a\geq 2$, then
$D_k(\Z_p^d)\leq N$, where
\[
k=\frac{N}{(a-1)p}-\sum_{\nu=a}^{d-1}\frac{\nu+1}{\nu(\nu-1)}p^{d-a}
-1 \geq \frac{N}{(a-1)p}\big(1-\frac{1}{a(1-p^{-1})}\big)
\]
\end{lemma}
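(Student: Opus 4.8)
The plan is to extract disjoint zero-sums greedily, relaxing in \emph{phases} the length we are willing to accept: in the $\nu$-th phase we look only for zero-sums of length at most $(\nu-1)p$. The coarser a zero-sum we allow, the shorter a sequence suffices to produce one, so as the sequence shrinks we pass to larger $\nu$. The quantitative input is a thresholded form of Lemma~\ref{Lem:short}: applying its last assertion inside a subgroup isomorphic to $\Z_p^\nu$ and lifting by Lemma~\ref{Lem:lift} gives, for $2\le\nu\le d$,
\[
\mathfrak{s}_{\le(\nu-1)p}(\Z_p^d)\ \le\ \frac{p^d-1}{p^\nu-1}\bigl((\nu+1)p-\nu-1\bigr)+1\ \le\ (\nu+1)p^{d-\nu+1}\ =:\ T_\nu,
\]
the last step being the elementary estimate $\frac{p^d-1}{p^\nu-1}(p-1)<p^{d-\nu+1}$. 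Hence every sequence in $\Z_p^d$ of length $\ge T_\nu$ contains a zero-sum of length $\le(\nu-1)p$; since $p\ge2$ the thresholds decrease strictly, $T_d<T_{d-1}<\dots<T_a$, and the choice of $a$ ensures $N>T_a$, so the process may be started at scale $a$.

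Now let $S$ be a sequence of length $N$ in $\Z_p^d$. For $\nu=a,a+1,\dots,d$ in turn, Phase $\nu$ removes from the current sequence a zero-sum of length $\le(\nu-1)p$ for as long as that sequence has length $\ge T_\nu$; write $\ell_\nu$ for the length at the start of Phase $\nu$, so that $\ell_a=N$. Two facts have to be checked. First, the phases dovetail: when Phase $\nu$ terminates the remaining length lies in $[T_\nu-(\nu-1)p,\,T_\nu)$, and $T_\nu-(\nu-1)p\ge T_{\nu+1}$ for all $a\le\nu\le d-1$ and $p\ge2$ (a one-line computation), so $T_{\nu+1}\le\ell_{\nu+1}<T_\nu$ and Phase $\nu+1$ can legitimately begin; Phase $d$ is permissible because $\mathfrak{s}_{\le(d-1)p}(\Z_p^d)\le(d+1)p-d<T_d$ comes straight from Lemma~\ref{Lem:short}. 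Second, after Phase $d$ fewer than $T_d=(d+1)p$ elements remain (one could squeeze out a few more using $D(\Z_p^d)=d(p-1)+1$, which only improves the additive constant), and these are discarded.

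For the count, each zero-sum removed in Phase $\nu$ has length at most $(\nu-1)p$, so Phase $\nu$ produces at least $\frac{\ell_\nu-\ell_{\nu+1}}{(\nu-1)p}$ of them; writing $\ell_{d+1}$ for the final residual and applying summation by parts, the total number of disjoint zero-sums produced is at least
\[
\frac1p\left(\frac{N}{a-1}-\frac{\ell_{d+1}}{d-1}-\sum_{\nu=a+1}^{d}\frac{\ell_\nu}{(\nu-1)(\nu-2)}\right).
\]
Using $\ell_{d+1}<T_d$ and $\ell_\nu<T_{\nu-1}$ for $\nu\ge a+1$, reindexing the last sum, and using $\frac{T_\mu}{\mu(\mu-1)p}=\frac{\mu+1}{\mu(\mu-1)}p^{d-\mu}$, this exceeds
\[
\frac{N}{(a-1)p}-\sum_{\nu=a}^{d-1}\frac{\nu+1}{\nu(\nu-1)}\,p^{d-\nu}-1\ =:\ k,
\]
so that $D_k(\Z_p^d)\le N$. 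The closed form follows by dominating the sum by its geometric majorant $\frac{a+1}{a(a-1)}\sum_{\mu\ge a}p^{d-\mu}<\frac{a+1}{a(a-1)}\cdot\frac{p^{d-a+1}}{p-1}$ and invoking $N>(a+1)p^{d-a+1}$, which gives $k\ge\frac{N}{(a-1)p}\bigl(1-\frac{1}{a(1-p^{-1})}\bigr)$.

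The combinatorial substance sits entirely in Lemmas~\ref{Lem:short} and \ref{Lem:lift}; what remains is bookkeeping. The one point I expect to need care is the arithmetic of the rounding slack: one must control the residual lengths $\ell_\nu$ from above (by $T_{\nu-1}$) while $\ell_a=N$ is controlled from below, and absorb the boundary term $\frac{\ell_{d+1}}{(d-1)p}$, together with whatever the optional $D(\Z_p^d)$-clean-up contributes, so that the additive constant comes out as exactly $-1$. For the weakened closed-form inequality even this is immaterial.
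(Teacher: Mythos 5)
Your proposal is essentially the paper's own argument: the same greedy phase-wise removal of zero-sums of length at most $(\nu-1)p$ with the thresholds $(\nu+1)p^{d-\nu+1}$ (implicitly coming from Lemma~\ref{Lem:short} lifted via Lemma~\ref{Lem:lift}), the same telescoping count, and the same geometric-majorant step for the weakened closed form. If anything you are more explicit than the paper about the boundary/additive-constant bookkeeping, which the paper also treats loosely, so there is nothing further to add.
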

\begin{proof}
Let $S$ be a sequence of length $N$ in $\Z_p^d$. We have to show that
$S$ contains a systm of $k$ disjoint zero sums. 
Since $N>(a+1)p^{d-a+1}$, $S$ contains a zero sum of length $\leq
(a-1)p$. We remove zero sums of this length, until the remaining
sequence has length $<(a+1)p^{d-a+1}$. From this point onward we
remove zero sums of length $\leq ap$, until the remainder has length
$<(a+2)p^{d-a+2}$, and so on. In this way we obtain a disjoint system
consisting of
\[
\frac{N-(a+1)p^{d-a+1}}{(a-1)p} +
\frac{(a+1)p^{d-a+1}-(a+2)p^{d-a+2}}{ap} + \dots + \frac{dp^2-(d+1)p}{ap}+1
\]
zero sums. This sum almost telescopes, yielding the first expression
for $k$. For the inequality note that the sequence
$\frac{\nu+1}{\nu(\nu-1)}$ is decreasing, hence the summands in the
series are decreasing faster than the geometrical series $\sum
p^{-\nu}$, and we conclude that the whole sum is bounded by the first
summand multiplied by $(1-p^{-1})^{-1}$. Our claim now follows.
\end{proof}
The following result is a special case of a result of
Lindstr\"om\cite{Lind} (see also \cite[Theorem~7.2, Lemma~7.4]{FS}).
\begin{lemma}\label{Lem:mediumsums}
Every sequence of length $2^{d-1}+1$ in $\Z_2^d$ contains a zero-sum
of length $\leq 3$, and this bound is best possible. Every sequence of
length $2^{(d+1)/2}+1$ in $\Z_2^d$ contains a zero-sum of length $\leq
4$.
\end{lemma}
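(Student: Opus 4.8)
The plan is to restate the zero-sum conditions combinatorially and then count. In $\Z_2^d$ every element has order dividing $2$, so a zero-sum of length $1$ is the element $0$, a zero-sum of length $2$ is a repeated term, a zero-sum of length $3$ is a relation $a+b=c$ between three distinct elements, and a zero-sum of length $4$ is a relation $a+b+c+e=0$ between four distinct elements. Hence a sequence with no zero-sum of length $\le 3$ contains no $0$ and no repetition, so it may be identified with a subset $A\subseteq\Z_2^d\setminus\{0\}$, and the remaining condition says that the set $B:=\{a+b:a,b\in A,\ a\neq b\}$ of sums of distinct pairs is disjoint from $A$.

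First I would prove the bound $|A|\le 2^{d-1}$. The three sets $\{0\}$, $A$, $B$ are pairwise disjoint: $0\notin A$ by construction, $0\notin B$ because $a+b=0$ forces $a=b$ in characteristic $2$, and $A\cap B=\emptyset$ is precisely the no-length-$3$ hypothesis. Therefore $1+|A|+|B|\le 2^d$. Now $B=(A+A)\setminus\{0\}$, and translating $A$ by one of its elements leaves $A+A$ unchanged while making it contain $0$, so $|A+A|\ge|A|$ and hence $|B|\ge|A|-1$. Combining gives $2|A|\le 2^d$, i.e. $|A|\le 2^{d-1}$, the first inequality. To see it is best possible, take $A=\{x\in\Z_2^d:x_1=1\}$, an affine hyperplane of size $2^{d-1}$: any sum of an odd number of its elements has first coordinate $1$ and is in particular nonzero, so listing the elements of $A$ produces a sequence of length $2^{d-1}$ with no zero-sum of odd length, and a fortiori none of length $\le 3$.

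For the second statement, let $A\subseteq\Z_2^d\setminus\{0\}$ be the value set of a sequence with no zero-sum of length $\le 4$. The key point is that $A$ is then a Sidon set. Indeed, suppose $a+b=c+e$ with $\{a,b\}$ and $\{c,e\}$ two pairs of distinct elements of $A$ that are not equal as sets. If they share exactly one element, then in characteristic $2$ the other two elements must coincide, contradicting that the pairs differ; if they are disjoint, then $a+b+c+e=0$ is a zero-sum of length $4$, again a contradiction. Hence the $\binom{|A|}{2}$ sums $a+b$ with $a\neq b$ are pairwise distinct, each is nonzero, and each avoids $A$ by the no-length-$3$ hypothesis, so these sums together with $A$ constitute $\binom{|A|}{2}+|A|$ distinct nonzero elements of $\Z_2^d$. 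Thus $\binom{|A|}{2}+|A|\le 2^d-1$, which rearranges to $|A|^2<|A|^2+|A|+2\le 2^{d+1}$, so $|A|<2^{(d+1)/2}$; in particular any sequence of length exceeding $2^{(d+1)/2}$, and so in particular of length $2^{(d+1)/2}+1$, must contain a zero-sum of length $\le 4$.

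The two arguments are short, and the only steps that need a little care are the inequality $|A+A|\ge|A|$ in $\Z_2^d$ (clear once $A$ is translated so as to contain $0$, since then $A\subseteq A+A$) and the case analysis upgrading the length-$4$ condition to the full Sidon property — specifically, checking that pairs overlapping in a single element cannot produce a genuinely new coincidence of sums. I do not anticipate a real obstacle here.
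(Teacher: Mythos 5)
Your proof is correct, but it is genuinely different from what the paper does: the paper does not prove Lemma~\ref{Lem:mediumsums} at all, it simply quotes it as a special case of Lindstr\"om's theorem on determining two vectors from their sum (see also Freeze--Schmid), i.e.\ it imports the known extremal results for caps and for $B_2$ (Sidon) sets in $\Z_2^d$, which in coding-theoretic language are the bounds for binary codes of minimum distance $4$ and $5$. You instead give a short self-contained double count: for length $\le 3$ you identify a zero-sum-free sequence with a cap $A\subseteq\Z_2^d\setminus\{0\}$ and use the disjointness of $\{0\}$, $A$ and the pair-sum set $B$ together with $|B|\ge|A|-1$ (your translation argument, or simply the injection $a\mapsto a+a_0$ for fixed $a_0\in A$) to get $|A|\le 2^{d-1}$, with the affine hyperplane showing sharpness; for length $\le 4$ you upgrade the hypothesis to the Sidon property and count the $\binom{|A|}{2}$ distinct nonzero pair sums avoiding $A$, giving $|A|<2^{(d+1)/2}$. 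This is essentially the counting argument underlying Lindstr\"om's result, so nothing is lost: the citation buys the exact extremal value in the distance-$5$ case, which the paper never needs, while your argument is elementary, makes the lemma self-contained, and yields exactly the stated bounds. One cosmetic point: in the sharpness example the phrase ``no zero-sum of odd length, and a fortiori none of length $\le 3$'' skips length $2$; you should add the (trivial) remark that a length-$2$ zero-sum in $\Z_2^d$ forces two equal terms, which your sequence of distinct elements does not have.
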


\section{Proof of Theorem~\ref{thm:main}}
\label{sec:main}

In this section we show that Theorem~\ref{thm:step} implies
Theorem~\ref{thm:main}. 

\begin{lemma}
Let $G$ be an abelian group of rank $r\geq 3$. Assume that
Theorem~\ref{thm:main} holds true for all proper subgroups of
$G$. Then it holds true for $G$ itself.
\end{lemma}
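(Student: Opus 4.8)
The plan is to run the inductive method of van Emde Boas and Kruyswijk, exploiting a cyclic quotient of maximal exponent. Write $G = \Z_{n_1}\oplus\dots\oplus\Z_{n_r}$ with $n_1\mid\dots\mid n_r$, so $\exp(G)=n_r$ and $|G|/\exp(G)=n_1\cdots n_{r-1}$. Let $H\leq G$ be a subgroup with $G/H\cong\Z_{n_r}$; then $|H| = |G|/\exp(G)$ and $\exp(H)\mid n_r$. Given a sequence $S$ in $G$ of length $L := \exp(G)+\frac{|G|}{\exp(G)}-1$ (or the appropriate length in the low-exponent case), I would repeatedly extract from $S$ disjoint subsequences whose images in $G/H$ sum to zero; each such subsequence has a nonempty sub-subsequence summing into $H$, and summing those elements gives a single element of $H$. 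The number of disjoint zero-sum blocks I can pull out of the image sequence in $\Z_{n_r}$ is controlled by $D_k(\Z_{n_r})$-type bounds — in fact by the elementary fact that a sequence of length $m$ in $\Z_n$ contains at least $m-n+1$ disjoint zero-sums (greedily, since any $n$ consecutive partial sums force a repeat). This manufactures a new sequence $S'$ in $H$ whose length is at least $L - \exp(G) + 1 = \frac{|G|}{\exp(G)} = |H| \geq D(H)$, the last inequality being the trivial bound $D(H)\leq |H|$; hence $S'$ has a zero-sum, which pulls back to a zero-sum in $S$.

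That crude argument already nearly works, but it does not use the inductive hypothesis and it is exactly why the paper needs Theorem~\ref{thm:step}: the bottleneck is that pulling each zero-sum block down to a single element of $H$ is wasteful when $\exp(H)$ is much smaller than $|H|$, and the trivial bound $D(H)\leq|H|$ is too lossy to close the induction sharply when $r\geq 3$. So the real plan is: split $G$ as an extension $1\to H\to G\to\Z_p^{?}\to 1$ more cleverly — or rather, reduce to the $p$-group building blocks. Because every group is a direct sum of its $p$-components and $D$ behaves submultiplicatively across coprime pieces (and $\exp$, $|G|$ multiply), it suffices to prove the bound when $|G|$ is a prime power, $G = \Z_{p^{e_1}}\oplus\dots\oplus\Z_{p^{e_r}}$. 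There one takes $H$ with $G/H\cong\Z_{p^{e_r}} = \Z_{\exp(G)}$, so $H$ is a proper subgroup to which the inductive hypothesis applies, and one applies Theorem~\ref{thm:step} to the relevant quotient $\Z_p^d$ to guarantee that a sequence of length $M$ in the quotient yields $k$ disjoint zero-sums with $M\leq p^{d-1}+pk$; this is precisely the inequality that makes the arithmetic balance.

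Concretely, the key steps in order: (i) reduce to $p$-groups via the coprime decomposition; (ii) fix the cyclic quotient $G/H\cong\Z_{\exp(G)}$ and set up the extraction process, peeling off blocks mapping to zero in $G/H$; (iii) bound the number of blocks peeled using Theorem~\ref{thm:step} applied to an auxiliary elementary $p$-group (after quotienting by $pG$ or a suitable subgroup to linearize), getting at least $k$ blocks out of a sequence of length $M\leq p^{d-1}+pk$; (iv) each block contributes one element to a new sequence $S'$ in $H$; check that $|S'|\geq D(H)$ using the inductive bound $D(H)\leq\exp(H)+\frac{|H|}{\exp(H)}-1$ together with $\exp(H)\mid\exp(G)$ and $|H|=|G|/\exp(G)$; (v) conclude, and separately handle the case $\exp(G)<\sqrt{|G|}$ where the target is $2\sqrt{|G|}-1$ — here one optimizes the split point (which coordinate to peel against) and invokes the $\eta$ and short-zero-sum bounds of Theorems~\ref{thm:eta} and~\ref{thm:short} for the small exceptional $\Z_p^d$.

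I expect the main obstacle to be the bookkeeping in step (iv): one must verify that the length lost in the peeling phase, roughly $\exp(G) - pk$ in the notation above plus lower-order terms, is always at most $L - D(H)$, and this requires simultaneously that $\exp(H)$ is not too small (else the inductive bound on $D(H)$ is weak) and that the quotient is not too large (else Theorem~\ref{thm:step} costs too much). The inequality $M\leq p^{d-1}+pk$ is tailored to exactly this balance, but marrying it to the induction across all the cases $\exp(G)\geq\sqrt{|G|}$ versus $<\sqrt{|G|}$, and in particular handling the boundary where $H$ itself has exponent close to $\sqrt{|H|}$, will be where the delicate case analysis lives; the non-$p$-group reduction and the existence of the cyclic quotient are routine by comparison.
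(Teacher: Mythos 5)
Your plan diverges from the paper's proof in ways that are not merely cosmetic: the central counting fact and the central structural choice are both wrong. The ``elementary fact'' that a sequence of length $m$ in $\Z_n$ contains at least $m-n+1$ disjoint zero-sums is false; the constant sequence $1^{m}$ has only zero-sums of length divisible by $n$, so in fact $D_k(\Z_n)=kn$ and only $\lfloor m/n\rfloor$ disjoint blocks are guaranteed. Consequently the cyclic-quotient scheme (peeling blocks against $G/H\cong\Z_{\exp(G)}$) cannot work, even as a first approximation: for $G=\Z_n^3$ a sequence of the target length $n^2+n-1$ yields only $n$ guaranteed blocks with sums in $H\cong\Z_n^2$, while $D(H)=2n-1$ are needed, and since $\eta(\Z_n)=n$ no refinement of the peeling in a cyclic quotient can close this gap (this is exactly why van Emde Boas--Kruyswijk only reach $\exp(G)\bigl(1+\log\frac{|G|}{\exp(G)}\bigr)$). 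The paper's proof instead peels against an elementary abelian section $\Z_p^d$ with $d\geq 3$, chosen so that the exponent drops by exactly $p$ (so the complementary piece $H$ has $|H|=|G|/p^d$ and $\exp(H)=\exp(G)/p$): blocks of length $\leq p$ are extracted while more than $M\geq\eta(\Z_p^d)$ elements remain, Theorem~\ref{thm:step} supplies $k$ further disjoint blocks among the last $M$ with $M\leq p^{d-1}+pk$, and one needs $D(H)\leq\frac{|A|-M}{p}+k$, with $D(H)$ bounded by the inductive hypothesis. Your attempt to import Theorem~\ref{thm:step} into the cyclic picture ``after quotienting by $pG$ to linearize'' does not make sense: a zero-sum modulo $pG$ is not a zero-sum modulo $H$, so such blocks do not produce elements of $H$.

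The preliminary reduction to $p$-groups is also unjustified and is not in the paper. The only splitting inequality available, $D(G_1\oplus G_2)\leq D(G_1)D(G_2)$, multiplies the component bounds, and the bound of Theorem~\ref{thm:main} is far from multiplicative: e.g.\ for coprime components of rank $\geq 2$ the product of the two bounds is roughly twice the target $\exp(G)+\frac{|G|}{\exp(G)}-1$ (or $2\sqrt{|G|}-1$), so you cannot ``assume $|G|$ is a prime power''; moreover, as the $\Z_n^3$ example with $n=p^e$ shows, the cyclic-quotient argument fails for $p$-groups too, so the reduction would not help. Finally, the real content of the paper's proof of this lemma --- the four-way case analysis comparing $\exp(G)$ with $\sqrt{|G|}$ and $\exp(H)$ with $\sqrt{|H|}$, including the subcase where $H$ has rank at most $2$, which needs Lemma~\ref{Lem:intermediate}, Lemma~\ref{Lem:mediumsums}, and van Emde Boas's result for $p=2$, $d=3$ --- is deferred in your proposal to ``bookkeeping''; without the correct choice of elementary abelian section those inequalities cannot even be set up, so the proposal has a genuine gap rather than an alternative route.
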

\begin{proof}
Let $p$ be a prime divisor of
$|G|$. Choose an elementary abelian 
subgroup $U\cong \Z_p^d$ of $G$, such that $d\geq 3$,
$\exp(G)=p\exp(G/U)$, and $|U|$ is minimal under these
assumptions. Put $H=G/U$. Let $A$ be a sequence consisting of
$\exp(G)+\frac{|G|}{\exp(G)}-1$ or $2\lfloor\sqrt{|G|}\rfloor-1$
elements, depending on whether $\exp(G)>\sqrt{|G|}$ or not. Denote by
$\overline{A}$ the image of $A$ in $H$. Then we obtain a zero-sum, by
choosing a large system of disjoint zero-sums in $\Z_p^d$, and then
choosing a zero-sum among the elements in $H$ defined by these sums,
provided that
\[
D(H)\leq\frac{|A|-M}{p}+k,
\]
where $M\geq\eta(\Z_p^d)$ and $k=k(p, d, M)$ is defined as in
Theorem~\ref{thm:step}. The left hand side can be estimated using the 
inductive hypothesis. We have $\exp(H)=\frac{\exp(G)}{p}$,
$|H|=\frac{|G|}{p^d}$. Assume first that $\exp(G)\geq\sqrt{|G|}$ and
$\exp(H)\geq\sqrt{|H|}$. Then our claim follows, provided that
\[
\frac{\exp(G)}{p}+\frac{|G|}{\exp(G)p^d}-1\leq \frac{|A|-M}{p}+k,
\]
inserting the choice of $A$ and rearranging terms this becomes
\[
\exp(G)+\frac{|G|}{\exp(G)p^{d-1}}-p\leq
\exp(G)+\frac{|G|}{\exp(G)}-1-M+pk. 
\]
The quotient of $G$ by its largest cyclic subgroup contains at least
$\Z_p^{d-1}$, hence, $\frac{|G|}{\exp(G)}\geq p^{d-1}$. Clearly, by
replacing $\frac{|G|}{\exp(|G|}$ with a lower bound we lose something,
hence, it suffices to establish the relation
\[
1-p\leq p^{d-1}-1-M+pk.
\]
However, this relation is implied by Theorem~\ref{thm:step}.

Next suppose that $\exp(G)\geq\sqrt{|G|}$ and $\exp(H)<\sqrt{|H|}$.
Then
\[
\sqrt{|G|/p^d} = \sqrt{|H|} > \exp(H)=\exp(G)/p \geq\sqrt{|G|/p^2},
\]
thus $d<2$, but this case was excluded from the outset.

If $\exp(G)<\sqrt{|G|}$ and $\exp(H)<\sqrt{|H|}$, the same argument as
in the first case yields $D(G)\leq 2\sqrt{G}-1$, provided that
\[
2p\sqrt{|H|}-p\leq2\sqrt{|G|}-1-M+pN.
\]
Since $|H|=\frac{|G|}{p^d}$ and $M-pN\leq p^{d-1}$ this becomes
\[
(2-2p^{-(d-2)/2})\sqrt{|G|} \geq p^{d-1}-p+1.
\]
As $\exp(H)<\sqrt{H}$ we have that $H$ is of rank at least 3, which by
our assumption on the size of $H$ implies that $|G|\geq p^{2d}$. This
implies 
\[
(2-2p^{-(d-2)/2})\sqrt{|G|}\geq (2-2p^{-(d-2)/2})p^d > \frac{1}{2}p^d > p^{d-1}-p+1,
\]
and our claim is proven.

If $\exp(G)<\sqrt{|G|}$ and $\exp(H)\geq\sqrt{H}$, the theorem follows
provided that
\[
\big(\exp(H)+\frac{|H|}{\exp(H)}-1\big)p \leq 2\sqrt{|G|}-1 - M +kp,
\]
that is
\[
\exp(G)+\frac{|G|}{p^{d-2}\exp(G)}-p \leq 2\sqrt{|G|}-1 - p^{d-1}.
\]
The bounds for $\exp(G)$ and $\exp(H)$ imply $\sqrt{|G|}
p^{d/2-1}\leq\exp(G) <\sqrt{|G|}$, and in this range the left hand
side is increasing as a function of $\exp(G)$, hence, this inequality
is certainly true if 
\[
\sqrt{|G|} \geq 1+p^{d-1} + \sqrt{|G|} p^{2-d} - p,
\]
which follows from $\sqrt{|G|}\geq p^d$. If this is not the case, then
$|H|<p^d$, and by the choice of $p$ we have that $H$ has rank at most
2, that is, $H=\Z_{n_1}\oplus\Z_{n_2}$ and
$G=\Z_p^{d-2}\oplus\Z_{pn_1}\oplus\Z_{pn_2}$, say. Then
$D(H)=n_1+n_2-1$, thus it suffices to prove
\[
D_{n_1+n_2-1}(\Z_p^d)\leq 2\sqrt{p^dn_1n_2}-1.
\]
Denote the right hand side by $N$. Then Lemma~\ref{Lem:intermediate}
shows that our claim holds true, provided that
\[
n_1+n_2-1\leq \frac{N}{(a-1)p}\big(1-\frac{1}{a(1-p^{-1})}\big).
\]
Using the trivial bound $n_1+n_2-1\leq n_1n_2$ we find that this
inequality follows from
\[
\frac{ap^{d-a}}{(a-1)p}\big(1-\frac{1}{a(1-p^{-1})}\big) \geq
\frac{a+p^{-(d-a)}}{4a}\big(ap^{-a} + p^{-d}\big),
\]
and by direct inspection we see that our claim follows for all $a\geq
2$, with exception only the case $(p,a)=(2,2)$. In this case our claim
follows from Lemma~\ref{Lem:mediumsums}, provided that $d>3$. Finally,
if $p=2$ and $d=3$, then $D(G)=M(G)$ was shown by van Emde Boas\cite{EB} under
the assumption that Lemma~\ref{Lem:PropC} holds true for all prime
divisors of $|H|$, which we today know to hold for all primes. Hence the proof
is complete.
\end{proof}

We know that $D(\Z_{n_1}\oplus\Z_{n_2}) = n_1+n_2-1$, hence
Theorem~\ref{thm:main}  holds true for all groups of rank 
$\leq 2$. Hence Theorem~\ref{thm:main} follows by induction over the
group order.

\section{Proof of Theorem~\ref{thm:step}: The case $p\leq 7$}
\label{sec:density}

\subsection{The primes 2 and 3}
 
%\begin{proof}
%Taking all vectors which have an odd number of entries 1 shows that
%the first bound is best possible.
%Now let $A\subseteq\Z_2^d$ be a set
%without zero-sums of length $\leq 3$. Choose some element $a\in A$. Then
%if $A$ intersects some pair $\{x, x+a\}$ in two points, we obtain a
%zero-sum of length $\leq 3$, hence, $|A|\leq 2^{d-1}$.

%Now let $A$ be
%a set without zero-sums of length $\leq 4$. Consider the set
%$A\dot{+}A=\{a+b, a, b\in A, a\neq b\}$. We claim that if $A$ contains
%no zero-sum of length $\leq 4$, then $|A\dot{+}A|\geq
%\frac{|A|(|A|-1)}{2}$. Suppose that $a+b=c+d$ with $a, b, c, d\in
%A$. Then $a+b+c+d=0$, hence, two of the four elements must be
%equal. The equal elements cannot be $a$ and $b$, for otherwise we
%would obtain a zero-sum of length 2. Hence we may assume that $a=c$,
%and subtracting $a$ we obtain $b=d$. We conclude that if $A$ contains
%no zero-sum of length $\leq 4$ then $|A|(|A|-1)<2^{d+1}$. Solving for
%$|A|$ now implies our claim. 
%\end{proof}

To prove Theorem~\ref{thm:step} for $p=2$, we want to show that in a
set of $2^d$ points we can find a system 
consisting of many disjoint zero-sums. We first remove one zero-sum of
length $\leq 2$, then zero-sums of
length $\leq 3$, until this is not possible anymore, and then we
switch to zero-sums of length 4. Finally we remove zero-sums of length
$\leq d+1$, which is possible in view of $D(\Z_2^d)=d+1$. In this
way
we obtain at least 
\begin{multline*}
\frac{2^d-2}{3} + \frac{2^{d-1}+2-2^{(d+1)/2}-1}{4} +
\frac{2^{(d+1)/2}-d-2}{d+1}+1 =\\ 
\frac{2^d}{4} + \frac{2^d+2}{24} - 2^{(d-3)/2} + \frac{2^{(d+1)/2}-1}{d+1}
\end{multline*}
zero-sums. Disregarding the last fraction we see that this quantity
is $\geq 2^{d-2}$, provided that $d\geq 7$. For $3\leq d\leq 6$ we
obtain our claim by explicitly computing this bound. 

Next we consider $p=3$. For $d\geq 6$ we have
\[
\eta(\Z_3^d)\leq \s(\Z_3^d) \leq 3^{d-6} \s(\Z_3^6) < 3^{d-1},
\]
hence, Theorem~\ref{thm:step} holds true with $N=0$, $M=3^{d-1}$. For $d=5$
it follows from Lemma~\ref{Lem:short} that a sequence of length 
$\eta(\Z_3^5)-3$ contains a system of 
$N=\lceil\frac{\eta(\Z_3^5)-2d-6}{3d-3}\rceil$ disjoint zero-sums,
hence, our claim follows provided that
\[
\eta(\Z_3^5)\leq 3\lceil\frac{\eta(\Z_3^5)-16}{12}\rceil + 3^4,
\]
that is, $89\leq 21+81$. In the same way we see that for $d=4$ a
sequence of length 39 in $\Z_3^4$ contains a system of 4 disjoint
zero-sums, thus our claim follows from $39\leq 12+27$. Finally it is
shown in \cite[Proposition~1]{Dav333d}, that a sequence of length 15
in $\Z_3^3$ contains a system of 3 disjoint zero-sums. Together with
$\eta(\Z_3^3)=17$ our claim follows in this case as well.

\subsection{The prime 5}

We begin by proving the second statement of
Theorem~\ref{thm:eta}. We do so by using a density increment argument
together with 
explicit calculations. Define the Fourier bias $\|A\|_u$ of a sequence
$A$ over $\F_p^d$ as
\[
\|A\|_u := \frac{1}{|A|}\max_{\xi\in\F_p^d\setminus\{0\}}
\sum_{\alpha\in A} e(\langle\xi, \alpha\rangle).
\]
Then we have the following.
\begin{lemma}
Let $p\geq3$ be a prime number, $A$ be a sequence over $\F_p^d$. Then
$A$ contains a zero-sum of length $p$, provided that
\[
\frac{|A|^{p-1}}{p^{(p-1)d}} >
\|A\|_u^{p-3}\left(\|A\|_u+\frac{p-1}{2p^{d-1}}\right)
 +
\binom{p}{2}\frac{|A|^{p-2}}{p^{(p-1)d}}
\]
\end{lemma}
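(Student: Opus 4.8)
The plan is to run a Fourier-analytic counting argument in the spirit of the polynomial/circle method for zero-sums, estimating $N^p(A)$, the number of zero-sum subsequences of $A$ of length exactly $p$, from below. First I would write
\[
N^p(A)=\frac{1}{p^d}\sum_{\xi\in\F_p^d}\;\sum_{\substack{\alpha_1,\dots,\alpha_p\in A\\ \text{distinct}}} e\!\left(\langle\xi,\alpha_1+\dots+\alpha_p\rangle\right),
\]
where ``distinct'' means a genuine $p$-element subsequence (I will need to be slightly careful about whether indices or values are counted, and divide by $p!$ accordingly; the inequality in the statement has a $\binom{p}{2}$, which is exactly the correction from passing between the ``with repetition'' power-sum $\big(\sum_{\alpha\in A}e(\langle\xi,\alpha\rangle)\big)^p$ and the ``distinct'' version via inclusion–exclusion on coincidences, keeping only the dominant pair-coincidence term). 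The $\xi=0$ term contributes the main term, of size roughly $\frac{|A|^p}{p^d}$ minus the $\binom{p}{2}|A|^{p-1}$ diagonal correction, i.e. after dividing by the appropriate factorial it is the ``expected'' count. The goal is then to show the $\xi\neq 0$ terms cannot cancel this.

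The key step is to bound $\big|\sum_{\alpha\in A}e(\langle\xi,\alpha\rangle)\big|$ for $\xi\neq 0$. By definition of the Fourier bias, the largest such sum has modulus $|A|\,\|A\|_u$. For the remaining nonzero frequencies I would use an $L^2$ (Parseval) estimate: $\sum_{\xi\neq 0}\big|\sum_{\alpha\in A}e(\langle\xi,\alpha\rangle)\big|^2 = p^d\cdot(\text{number of pairs}) - |A|^2 \le p^d\cdot\big(|A|^2/p^d + \text{small}\big)-|A|^2$, which is where the term $\frac{p-1}{2p^{d-1}}$ plausibly enters — it looks like a sharpened Parseval bound accounting for the fact that $A$ is a sequence (with multiplicities) rather than a set, or a second-moment input on pair sums. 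Then I would bound the full error by pulling out $p-3$ factors of the sup-bound $|A|\,\|A\|_u$ and controlling the remaining quadratic-type sum by Parseval, giving an error term of shape $|A|^p\,\|A\|_u^{p-3}\big(\|A\|_u + \tfrac{p-1}{2p^{d-1}}\big)/p^{d}$ after dividing through. Comparing main term against error term, and dividing both sides by $|A|^{p-1}$, yields precisely the stated inequality; whenever it holds we get $N^p(A)>0$, hence a zero-sum of length $p$.

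Concretely, the steps in order: (i) expand $N^p(A)$ as a sum over $\F_p^d$ of exponential sums, isolating $\xi=0$; (ii) pass from power sums to genuine $p$-subsequences by inclusion–exclusion, retaining the leading $\binom{p}{2}|A|^{p-2}$ coincidence term and discarding lower-order coincidences (these only help the lower bound or are absorbed — I should check the sign works out, which it does since we want a lower bound on $N^p$ and higher coincidences reduce the power-sum overcount); (iii) for $\xi\neq 0$ bound one factor by $|A|$, up to $p-3$ factors by $|A|\|A\|_u$, and the leftover two-factor sum by Parseval, incorporating the multiplicity correction to get the $\frac{p-1}{2p^{d-1}}$; (iv) assemble and divide by $|A|^{p-1}$.

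The main obstacle I anticipate is getting the combinatorial bookkeeping exactly right so that the constants match: specifically, justifying that the coincidence correction is captured by a single $\binom{p}{2}\frac{|A|^{p-2}}{p^{(p-1)d}}$ term with no further positive terms needed on the right-hand side, and pinning down the precise origin of the $\frac{p-1}{2p^{d-1}}$ summand — whether it is a refined Parseval identity for sequences with bounded multiplicity, or a bound on $\sum_{\xi\neq0}|\widehat{A}(\xi)|^2$ using that the maximum is already extracted. I would handle the former by writing out the full inclusion–exclusion and observing that all terms beyond the pair-coincidence have smaller exponent of $|A|$ and can be merged into the error or shown to have a favorable sign; the latter by a direct second-moment computation on the multiset $A$. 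Everything else is routine triangle-inequality and rearrangement.
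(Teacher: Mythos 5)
Your overall strategy (count solutions of $a_1+\dots+a_p=0$ by Fourier analysis, main term $|A|^p/p^d$ versus a bias-controlled error, then correct for tuples with repeated indices) is the same family of argument as the paper, which simply invokes the exponential-sum estimate of Tao and Vu (\emph{Additive combinatorics}, Lemma~4.13) instead of re-deriving it. But your proposal has a genuine gap exactly at the point you flag as uncertain: the origin of the term $\frac{p-1}{2p^{d-1}}\|A\|_u^{p-3}$. It is not a sharpened Parseval bound in the nondegenerate count. In the paper, the number $M$ of solutions with a repeated index is bounded by $\binom{p}{2}$ times the number of solutions of the degenerate equation $2a_1+a_2+\dots+a_{p-1}=0$, and this degenerate count is estimated by the \emph{same} Fourier lemma, using that multiplication by $2$ is linear so $\|2A\|_u=\|A\|_u$. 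Its main term yields the statement's $\binom{p}{2}\frac{|A|^{p-2}}{p^{(p-1)d}}$ and its error term yields the $\frac{p-1}{2p^{d-1}}\|A\|_u^{p-3}$ piece; indeed $\frac{p-1}{2p^{d-1}}=\binom{p}{2}/p^{d}$, so both ``extra'' terms on the right-hand side come from the collision bound, not from the $\xi\neq 0$ analysis of the main count.

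Your plan to dispose of the coincidences by inclusion--exclusion, ``retaining the leading $\binom{p}{2}|A|^{p-2}$ term'' and absorbing the rest, does not close this gap. First, $A$ is a sequence: fixing $p-2$ of the indices determines the remaining element of $\F_p^d$ but not the index, so the trivial count of collision solutions is $\binom{p}{2}|A|^{p-2}\max_x m_A(x)$, not $\binom{p}{2}|A|^{p-2}$ (one can assume $m_A(x)\leq p-1$, but the factor remains). Second, and more importantly, after renormalizing, the term the lemma allows for the coincidence correction is $\binom{p}{2}\frac{|A|^{p-1}}{p^{d}}$, which in the intended range $|A|<p^{d}$ is \emph{smaller} than any trivial count; so a Fourier estimate of the degenerate equation is unavoidable, and its error term is precisely the $\frac{p-1}{2p^{d-1}}$ summand you could not place. (A smaller point: your split ``one factor by $|A|$, $p-3$ factors by the bias, two by Parseval'' produces $\|A\|_u^{p-3}$ where the nondegenerate error actually carries $\|A\|_u^{p-2}$, i.e.\ $p-2$ factors are sup-bounded and two handled by Parseval; the $\|A\|_u^{p-3}$ in the statement belongs to the $(p-1)$-variable degenerate equation.) With the degenerate equation treated by the same bias estimate, the bookkeeping assembles exactly into the stated inequality.
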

\begin{proof}
Let $N$ be the number of solutions of the equation $a_1+\dots+a_p = 0$
with $a_i\in A$. From \cite[Lemma~4.13]{TaoVu} we have
\[
N \geq \frac{|A|^p}{p^d} - \|A\|_u^{p-2}|A|p^{(p-2)d}.
\]
A solution $a_1+\dots+a_p = 0$ corresponds to a zero-sum of $A$, if $a_1,
\ldots, a_p$ are distinct elements in $A$. Using M\"obius inversion
over the lattice of set partitions one could compute the over-count
exactly, however, it turns out that the resulting terms are of
negligible order, which is why we bound the error rather crudely. The
number of solutions $M$ in which not all elements are different is at most
$\binom{p}{2}$ times the number of solutions of the equation
$2a_1+a_2+\dots+a_{p-1}=0$. Since multiplication by 2 is a linear map
in $\F_p^d$ we have that $\|2A\|_u=\|A\|$, using
\cite[Lemma~4.13]{TaoVu} again we obtain
\[
M\leq \frac{|A|^{p-1}}{p^d} + \|A\|_u^{p-3}|A|p^{(p-3)d}.
\]
Hence the number of zero-sums is at least
\[
N-M\geq \frac{|A|^p}{p^d} - \|A\|_u^{p-2}|A|p^{(p-2)d} -
\frac{|A|^{p-1}}{p^d} - \|A\|_u^{p-3}|A|p^{(p-3)d},
\]
and our claim follows.
\end{proof}

We now use this lemma recursively to obtain bounds for $\s(\Z_5^d)$,
starting from $\s(\Z_5^3)=37$.

Consider a 3-dimensional subgroup $U$, and let $\xi\in\Z_5^4$ be a
vector such that $v\bot U$. Let $n_1, \ldots, n_5$ be the number of
elements of $A$ in each of the 5 cosets of $U$, $\zeta$ be a fifth
root of unity. If $\max(n_i)\geq 37$, we have a zero-sum of length $p$
in one of the hyperplanes. Hence
\[
\|A\|_u\leq \frac{1}{|A|}\underset{0\leq n_i\leq 36}{\max_{n_1+\dots+n_5=|A|}}
|n_1+n_2 \zeta + \dots + n_5\zeta^4|.
\]
Since $1+\zeta+\dots+\zeta^4=0$, we have
\[
n_1+n_2 \zeta + \dots + n_5\zeta^4 = (36-n_1)+(36-n_2) \zeta + \dots +
(36-n_5)\zeta^4,
\]
that is, 
\[
\underset{0\leq n_i\leq 36}{\max_{n_1+\dots+n_5=|A|}}
|n_1+n_2 \zeta + \dots + n_5\zeta^4| = \underset{0\leq n_i\leq 36}{\max_{n_1+\dots+n_5=180-|A|}}
|n_1+n_2 \zeta + \dots + n_5\zeta^4|.
\]
For $|A|\geq 144$ the right hand side equals $180-|A|$, and we obtain
a zero-sum, provided that
\[
\left(\frac{|A|}{625}\right)^4 > \left(\frac{180-|A|}{|A|}\right)^2
\left(\frac{180-|A|}{|A|} + \frac{2}{125}\right) + \frac{2}{125}\left(\frac{|A|}{625}\right)^3.
\]
One easily finds that this is the case for $|A|=157$, and we deduce
$\s(\Z_5^4)\leq 157$. The same argument yields for $d=5$ the
inequality
\[
\left(\frac{|A|}{3125}\right)^4 > \left(\frac{780-|A|}{|A|}\right)^2
\left(\frac{780-|A|}{|A|} + \frac{2}{625}\right) + \frac{2}{625}\left(\frac{|A|}{3125}\right)^3,
\]
which is satisfied for $|A|\geq 690$, that is, we obtain
$\s(\Z_5^5)\leq 690$. Finally for $\Z_5^6$ we obtain
\[
\left(\frac{|A|}{15625}\right)^4 > \left(\frac{3445-|A|}{|A|}\right)^2
\left(\frac{3445-|A|}{|A|} + \frac{2}{3125}\right) + \frac{2}{3125}\left(\frac{|A|}{3125}\right)^3,
\]
which is satisfied for $|A|\geq 3091$, thus the last inequality
follows as well.

Hence, Theorem~\ref{thm:eta}(2) is proven.

We have $\eta(\Z_5^3)=33$, and among $33$ elements
we can find one zero-sum of length $\leq 5$, one of length $\leq 10$,
and one more among the remaining $18\geq D(\Z_5^3)=13$ points. Hence we can
take $M=33, N=3$, and Theorem~\ref{thm:step} follows. Moreover we
have $\eta(\Z_5^4)\leq \s(\Z_5^4)-4\leq 153$, and among 153 elements
we can find one zero-sum of 
length $\leq 5$, 13 zero-sums of length $\leq 10$, and one more
zero-sum, that is, we can take $N=15$, and Theorem~\ref{thm:step}
follows for $d=4$ as well.

For $d=5$ we have $\eta(\Z_5^5)\leq\s(\Z_5^5)-4\leq 686$, and
among 686 points in $\Z_5$ we find 24 disjoint zero-sums of length
$\leq 20$, thus taking $M=686$, $N=24$, our claim follows since $M\leq
625+120$. For $d\geq 6$ we have
\[
\s(\Z_5^d)\leq 5^{d-6}\s(\Z_5^6)\leq 3091\cdot 5^{d-6} < 5^{d-1},
\]
and our claim becomes trivial.

\section{Proof of Theorem~\ref{thm:step}: The case $p\geq 7$}
\label{sec:induct}

We begin by proving the last statement of Theorem~\ref{thm:eta}.

\begin{lemma}
\label{Lem:PropC}
Let $A$ be a sequence of length $3p-3$ in $\Z_p^2$ without a zero-sum
of length $\leq p$. Then $A=\{a^{p-1}, b^{p-1}, c^{p-1}\}$ for
suitable elements $a, b, c\in\Z_p^2$.
\end{lemma}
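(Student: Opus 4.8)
The plan is to prove this characterization of extremal sequences for the property ``no zero-sum of length $\leq p$'' in $\Z_p^2$, i.e.\ sequences of length $3p-3 = \mathfrak{s}_{\leq p}(\Z_p^2) - 1$ realizing the extremal value. The statement asserts that the only such sequence (up to the group automorphism) is three elements each repeated $p-1$ times, where $a,b,c$ can be taken to be a basis-like triple (in fact $a+b+c=0$, or at least $a,b,c$ generate and are in ``general position''—the precise normalization will come out of the argument).

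First I would record the relevant known results on zero-sums in $\Z_p^2$. The Davenport constant is $D(\Z_p^2) = 2p-1$, and more relevantly the Erd\H{o}s--Ginzburg--Ziv-type constant $\mathfrak{s}_{\leq p}(\Z_p^2)$: a sequence of length $3p-2$ always contains a zero-sum of length $\leq p$ (this is a theorem going back to work on the EGZ theorem and Kemnitz-type questions in rank $2$; the relevant extremal statement is essentially the Gao--Geroldinger / Reiher circle of results, with the rank-$2$ case classical). So the hypothesis places $A$ at the extremal length $3p-3$. The strategy is then a structural/inductive argument: I would first show $A$ must use few distinct values, then that each value has high multiplicity, then pin down the positions.

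The key steps, in order: (1) Show that $A$ cannot contain any element with multiplicity $\geq p$; indeed $p$ copies of a single element form a zero-sum of length $p$. So every multiplicity is $\leq p-1$. (2) Show that if some value $a$ occurs with multiplicity $m$, then deleting all copies of $a$ leaves a sequence of length $3p-3-m$ in $\Z_p^2$ with no zero-sum of length $\leq p - \ell$ for appropriate $\ell$; combined with EGZ-type counting (a zero-sum of length $p-1$ through $p$ can be completed using copies of a carefully chosen element, or using the relation among the $n_i$) one forces the number of distinct values to be exactly $3$. The cleanest route is: pass to the quotient by a cyclic subgroup generated by one of the elements, or use the Chevalley--Warning / Kemnitz-style polynomial-method count $1 + N^p(A') + \dots \equiv 0 \pmod p$ on suitable subsequences $A'$ of length $2p-1$ or $3p-3$, to derive that a zero-sum of the forbidden length must exist unless the support is exactly $\{a,b,c\}$. (3) Once the support is $\{a,b,c\}$ with multiplicities $m_a + m_b + m_c = 3p-3$ and each $\leq p-1$, conclude each equals $p-1$. (4) Show $a,b,c$ are in general position (any two generate, equivalently $a,b,c$ pairwise distinct and no one is $0$, and in fact the line structure forces $a + b + c = 0$ after normalization): if, say, $c$ lay on the line through $a$ and the origin, one builds a short zero-sum by hand.

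The main obstacle I expect is step (2)—ruling out $4$ or more distinct values and simultaneously handling the case where one value has small multiplicity. The polynomial method gives divisibility information but not directly the structure; the passage from ``$A$ has a zero-sum of length $p$ or $3p$'' (the analogue of the Kemnitz argument used in Lemma~\ref{Lem:short}) to ``length $\leq p$'' requires extracting a sub-zero-sum and splitting, which is delicate when multiplicities are unbalanced. I would likely argue by induction on $p$ or on the number of distinct values, peeling off one short zero-sum at a time and tracking how the ``deficiency'' $3(\#\text{distinct}) - (\text{something})$ behaves, the point being that a sequence with $\geq 4$ distinct values and length $3p-3$ has enough ``room'' to force a short zero-sum via a greedy/pigeonhole argument on cosets of a well-chosen cyclic subgroup. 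An alternative, possibly cleaner, approach: reduce mod a cyclic subgroup $U \cong \Z_p$ so that $A$ maps to a sequence $\overline{A}$ in $\Z_p$; if $\overline{A}$ has a zero-sum of length $\leq p$ whose preimages sum to $0$ we are done, and otherwise the fibers must be concentrated, which by the classical characterization of extremal sequences in $\Z_p$ (the Savchev--Chen / Vosper-type result: an extremal sequence in $\Z_p$ is an arithmetic-progression-like configuration $a^{m} b^{n}$) forces $A$ into the claimed shape. I expect the write-up to choose whichever of these the authors find shortest; the structural input on $\Z_p$ extremal sequences is the real engine.
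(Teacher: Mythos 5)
There is a genuine gap, and it sits exactly where you flag it: step (2), forcing the support to consist of three elements (and then pinning multiplicities at $p-1$), is not a technical obstacle to be smoothed over --- it is the entire content of the lemma, and it is a deep theorem, not a ``classical'' rank-two fact. The statement is the inverse problem associated with $\eta(\Z_p^2)=3p-2$, and it is known to be equivalent (via work of Gao and Geroldinger \cite{GGIntegers}) to \emph{property B}: in every zero-sum free sequence of maximal length $2p-2$ over $\Z_p^2$ some element occurs with multiplicity at least $p-2$. Property B resisted precisely the tools you propose --- Chevalley--Warning/Kemnitz-style counts of $N^p$, EGZ-type completions, and reduction modulo a cyclic subgroup combined with the Savchev--Chen/Vosper-type description of long zero-sum free sequences in $\Z_p$ --- for many years; these methods yield divisibility and concentration information but do not rule out supports of size $\geq 4$ or unbalanced multiplicities without substantial additional ideas. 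It was finally proved for all primes only by Reiher \cite{ReiherB}, by a long and intricate argument. So your sketch, as written, does not constitute a proof: the inductive/greedy ``peeling'' you describe in step (2) has no mechanism that actually excludes, say, a sequence with four generic values of multiplicities near $\tfrac{3(p-1)}{4}$, and the polynomial method alone is known not to close this.

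For comparison, the paper does not attempt a direct proof at all: it quotes the Gao--Geroldinger reduction (property B implies the stated characterization of length-$(3p-3)$ sequences without zero-sums of length $\leq p$) together with Reiher's theorem that every prime has property B. If you want a self-contained argument you would essentially have to reprove Reiher's result, which is far beyond the scope of the techniques in your outline; the correct move here is citation, not a from-scratch induction.
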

\begin{proof}
A prime $p$ is said to satisfy {\em property B} if in every maximal
zero-sum free subset of $\Z_p^2$ some element occurs with multiplicity
at least $p-2$. Gao and Geroldinger\cite{GGIntegers} have shown that
the condition of the above lemma  holds true 
if $p$ has property B, and Reiher\cite{ReiherB} has shown that every
prime has property B.
\end{proof}
For $p=7$ we need a little more specific information.
\begin{lemma}
Let $A$ be a sequence of length 15 over $\Z_7^2$, which does not
contain a zero-sum of length $\leq 7$. Then there exist a cyclic
subgroup which contains 3 elements of $A$.
\end{lemma}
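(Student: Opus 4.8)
The statement to prove is: if $A$ is a sequence of length $15$ over $\Z_7^2$ with no zero-sum of length $\le 7$, then some cyclic (order-$7$) subgroup of $\Z_7^2$ contains $3$ elements of $A$. My first move is to invoke the preceding Lemma~\ref{Lem:PropC} (with $p=7$): since $15 > 14 = 3p-3$... wait, $15 = 3p-6$ is actually \emph{smaller} than $3p-3=18$, so Lemma~\ref{Lem:PropC} does not apply directly to $A$. Instead I would argue by contradiction: suppose no cyclic subgroup contains $3$ elements of $A$. There are $8$ cyclic subgroups of $\Z_7^2$ (the $8$ points of the projective line $\mathbb{P}^1(\F_7)$), and every nonzero element of $\Z_7^2$ lies in exactly one of them. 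If each such subgroup contains at most $2$ elements of $A$, and $A$ contains no $0$ (else a zero-sum of length $1$), then $|A| \le 2\cdot 8 = 16$, which does not yet give a contradiction. So I need to extract more: the real content is to show that $A$ being zero-sum-free up to length $7$, together with $|A|=15$, forces a near-extremal structure, and then rule out the case where that structure spreads across many cyclic subgroups.

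The key technical step is to extend $A$ to a sequence $A'$ of length $18 = 3p-3$ that still has no zero-sum of length $\le 7$, so that Lemma~\ref{Lem:PropC} applies and gives $A' = \{a^6, b^6, c^6\}$ for some $a,b,c$. To do this I would add $3$ suitable elements. The natural candidates: if $a$ is an element occurring in $A$ (or a direction we choose), adjoining copies of $a$ raises no short zero-sum as long as the multiplicity of $a$ stays $\le p-1$ and $a$ does not combine with existing short partial sums to hit $0$ in $\le 7$ steps. Here I expect to use that a maximal zero-sum-free set of $\Z_7^2$ has size $D(\Z_7^2)-1 = 12$, while "no zero-sum of length $\le 7 = \exp$" is the $\eta$-type condition with $\eta(\Z_7^2) = 3p-3 = 18$ being sharp — so a sequence of length $15$ is within $3$ of the extremal length, and the extremal configurations are exactly the $\{a^6,b^6,c^6\}$. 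Once $A' = \{a^6, b^6, c^6\}$ is forced, $A$ is obtained by deleting $3$ elements from $\{a^6,b^6,c^6\}$, hence $A$ is supported on the three cyclic subgroups $\langle a\rangle, \langle b\rangle, \langle c\rangle$ with total multiplicity $15$; by pigeonhole one of these three contains at least $\lceil 15/3\rceil = 5 \ge 3$ elements of $A$, which proves the claim and in fact something stronger.

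The main obstacle is the extension step: showing that one can always adjoin $3$ elements to $A$ without creating a zero-sum of length $\le 7$. This is not automatic — if $A$ already "almost" closes up, adding even one element might create a short zero-sum. I would handle this by a case analysis on which cyclic subgroups meet $A$: if $A$ meets at most $3$ cyclic subgroups, pigeonhole is immediate and we are done without any extension; if $A$ meets $\ge 4$ cyclic subgroups, I claim we can find a direction $a$ with current multiplicity $\le 5$ in $A$ such that appending one more copy of $a$ keeps the no-short-zero-sum property — the point being that a short zero-sum through a new copy of $a$ would correspond to a short zero-sum-with-one-prescribed-element in $A$, and the hypothesis $|A|=15$ leaves enough slack (via the sharpness of $\eta(\Z_7^2)=18$ and Lemma~\ref{Lem:PropC}'s rigidity) to avoid all of these. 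Iterating three times lands us in the scope of Lemma~\ref{Lem:PropC}. If the clean extension argument proves delicate, the fallback is a direct combinatorial count: enumerate the possible "profiles" $(m_1,\dots,m_8)$ of multiplicities of $A$ across the $8$ cyclic subgroups with $\sum m_i = 15$ and all $m_i \le 2$, and for each such profile exhibit an explicit zero-sum of length $\le 7$ using the fact that $7$ elements spread over $\ge 4$ lines in $\F_7^2$ in "general position" must contain a zero-sum — a Chevalley--Warning / Kemnitz-type input in dimension $2$.
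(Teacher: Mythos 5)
Your argument has a genuine gap at its central step. Everything hinges on the claim that a length-$15$ sequence $A$ with no zero-sum of length $\leq 7$ can be extended by three elements to a length-$18$ sequence with the same property, so that Lemma~\ref{Lem:PropC} applies and forces $A$ to sit inside some $\{a^6,b^6,c^6\}$. You never prove this extension claim, and it is at least as strong as the lemma you are trying to prove: any subsequence of length $15$ of $\{a^6,b^6,c^6\}$ automatically has at least $5$ terms in one cyclic subgroup, so a putative counterexample to the lemma (every one of the $8$ cyclic subgroups containing at most $2$ terms of $A$) is exactly a sequence that cannot be extended in your sense. Appealing to ``enough slack via the sharpness of $\eta(\Z_7^2)$ and the rigidity of Lemma~\ref{Lem:PropC}'' is therefore circular --- the whole difficulty of the lemma is concentrated in precisely that step, and it is here that the paper does its real work: it assumes the counterexample configuration (all cyclic subgroups with multiplicity $\leq 2$) and kills it by a bootstrap case analysis (the reduction replacing $y=2x$ by $x$, the claim that at most one subgroup contains two distinct elements, the existence of multiplicity-$2$ elements $x,y,z$ with $x+y\in\langle z\rangle$, and the elimination of the configurations $(1,0),(0,1),(t,t)$). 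A small but relevant slip: $\eta(\Z_7^2)=3p-2=19$, not $18$; $18=3p-3$ is the maximal length of a sequence without a zero-sum of length $\leq p$, which is why Lemma~\ref{Lem:PropC} speaks of length $3p-3$.

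Your fallback does not close the gap either. The heuristic that ``$7$ elements spread over $\geq 4$ lines in general position must contain a zero-sum'' is false as stated (zero-sum free sequences in $\Z_7^2$ have length up to $12$, and short-zero-sum-free sequences with small per-line multiplicities certainly exist at length $7$), so there is no clean Chevalley--Warning input in dimension $2$ that disposes of the profiles $(m_1,\dots,m_8)$ with $m_i\leq 2$ and $\sum m_i=15$. Enumerating those profiles and exhibiting explicit short zero-sums for each is exactly the content of the paper's proof, and your outline does not supply any of that analysis. So as written the proposal reduces the lemma either to an unproved (and essentially equivalent) extension statement or to an unexecuted case enumeration.
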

\begin{proof}
The proof can be done either by a mindless computer calculation or by
a slightly more sophisticated human readable argument, however, as the
latter also boils down to a sequence of case distinction we shall be a
little brief. Let $A$ be a counterexample, that is, a
zero-sum free sequence of length 15, such that every cyclic subgroup
contains at most 15 points. We shall deduce properties of $A$ in a
bootstrap manner.

{\it Without loss we may assume that $A$ contains no two elements $x,
  y$ with $y=2x$.} Suppose that $A$ contains two such elements. Then
replacing $y$ by $x$ gives a new sequence $A'$, such that for an element
in $\Z_p^2$ the shortest representation as a subsum of $A'$ is at
least as long as the shortest representation as a subsum of $A$. In
particular, $A'$ contains no short zero sum.

{\it There is at most one subgroup which contains two different
  elements.} Without loss we may assume that $(1,0), (3,0), (0,1),
(0,3)$ are in $A$. The subgroup generated by $(1,1)$ can contain
either $(5,5)$ with multiplicity 2, or one of $(1,1),(2,2), (5,5)$
with multiplicity 1. If $(5,5)$ occurs twice, the remaining elements
of the sequence must be among $\{(2,3), (2,4), (3,2), (3,5), (4,2),
(4,5), (5,3), (5,4)\}$, which can easily be ruled out.  If
$(5,5)$ does not occur twice, then all subgroups different from
$\langle(1,0)\rangle, \langle(0,1)\rangle, \langle(1,1)\rangle$
contain one element with multiplicity 2. The only possible elements in
$\langle(1, -1)\rangle$ are $(1,6), (6, 1)$, and by symmetry we may
assume that $(6,1)$ occurs twice. Now $\langle(1,2)\rangle$ must
contain $(6,5)$, and we conclude that the remaining points are $(2,6)$,
$(3,1)$, $(5,4)$, and we obtain the zero-sum
$(5,4)+(6,1)+(3,1)+(0,1)$.

{\it There exist 3 different elements $x, y, z$, each of multiplicity
  2 in $A$, such that $x+y\in\langle z\rangle$.} Otherwise there are 6
elements of $\Z_7^2$, such that no two generate the same subgroup, and
the sum of two different of them is contained in two fixed cyclic
subgroups, which easily gives a contradiction.

{\it $(1,0)$, $(0,1)$ and $(2,2)$ cannot all occur with multiplicity
  $2$.} Suppose otherwise. Then the only further elements which can
occur with multiplicity 2 are $(1,6)$, $(2,4)$, $(4,2)$, $(4,6)$,
$(6,1)$, and $(6,4)$. Moreover, two elements which are exchanged by
the map $(x,y)\mapsto (y,x)$ cannot both occur in $A$, hence we may
assume that $(6,1)$ occurs twice in $A$, while $(1,6)$ does not. Then
$(2,4)$ and $(4,6)$ occur twice in $A$, and we get the zero-sum
$2\cdot(6,1)+(1,6)+(1,0)$.

{\it $(1,0)$, $(0,1)$ and $(1,1)$ cannot all occur with multiplicity
  $2$.} Using the previous result one finds that all further elements
of multiplicity 2 have one coordinate equal to 1. By symmetry we may
assume that there are two further elements of the form $(1, t)$. If
there is an element of the form $(x,y)$, $2\leq x\leq 5$, this
immediately gives a zero-sum of length $8-x$, hence all elements in
$A$ are $(1,0)$, or of the form $(1,t)$, $(6,t)$. Since there are at
least 8 different elements in $A$, there are at least 6 different
elements of the form $(x,0)$, which can be written as the sum of one
element of the form $(1,t)$ and one of the form $(6,t)$. Hence we
obtain a zero-sum of length 2 or 3.

{\it $(1,0)$, $(0,1)$ and $(4,4)$ cannot all occur with multiplicity
  $2$.} There are at least 6 elements occurring with multiplicity 2,
thus there are at least two further elements outside the subgroup
$\langle(1,-1)\rangle$. But every element different from $(2,4), (3,5),
(4,2), (5,3)$ immediately gives a zero-sum, and $(2,4)$ and $(4,2)$ as
well as $(5,3)$ and $(3,5)$ cannot both occur at the same time, thus
we may assume that $(5,3)$. The only possible element in
$\langle(3,1)\rangle$ is $(1,5)$, and this element can only occur
once. Hence $(2,4)$ becomes impossible, and we conclude that $(4,2)$
occurs with multiplicity 2. But then all elements in
$\langle(1,-1)\rangle$ yield zero-sums.

We can now finish the proof. We know that there exist two elements
$x,y\in A$, both with multiplicity 2, such that $\langle x+y\rangle$
contains an element of multiplicity 2. We may set $x=(1,0)$,
$y=(0,1)$, and let $(t,t)$ be the element in $\langle x+y\rangle$. Then
$t=0, 3, 5, 6$ immediately yields z short zero-sum, while $t=1, 2, 4$
was excluded above. Hence no counterexample exists.
\end{proof}
Now suppose that $p\geq 7$ is a prime number, and $A$ is a sequence
in $\Z_p^d$ with 
$|A|=n=\frac{p^d-p}{p^2-p}(3p-7)+4$ without zero-sums of length $\leq
p$. Let $\ell$ be a one-dimensional subgroup of $\Z_p^d$, such that
$m=|\ell\cap A|$ is maximal. Now consider all 2-dimensional subgroups
containing $\ell$. Each such subgroup contains $p^2-p$ points outside
$\ell$. Each point of $A$ is either contained in $\ell$ or occurs in
$\frac{p^2-p}{p^d-p}$ of all such subgroups. Hence among all subgroups
there is one which contains $\lceil\frac{p^2-p}{p^d-p}(n-m)\rceil$
points outside $\ell$. Call this subgroup $U$. Therefore $U$ contains
at least
\[
\left\lceil\frac{p^2-p}{p^d-p}(n-m)\right\rceil+m  \geq
\left\lceil3p-7 + m - \frac{m-4}{p^{d-2}+\dots+1}\right\rceil 
\]
elements of $A$. Since $\eta(\Z_p^2)=3p-2$, this quantity is $\leq
3p-3$, which implies $m\leq 4$. Hence $m\leq 3$, which implies that
$\frac{m-4}{p^{d-2}+\dots+1}$ is negative, and we
find that $U$ contains $3p-6+m\leq 3p-4$ points, that is, $m\leq
2$. However, this implies that each of the $p+1$ one-dimensional
subgroups of $U$ contain at most 2 elements of $A$, thus $3p-6\leq|A\cap
U|\leq 2p+2$, which implies $p\leq8$, hence, by our assumption
$p=7$. In the case $p=7$ we obtain that $U\cong\Z_7^2$ contains a
sequence $A$ of 15 elements, such that no cyclic subgroup contains more
than 2 of them, and $A$ contains no zero-sum of length $\leq 7$.

We can now prove Theorem~\ref{thm:step} for $p\geq 7$. We take
$M=\frac{p^d-p}{p^2-p}(3p-7)+4$, and let $k$ be the largest integer
for which Lemma~\ref{Lem:shortsums} ensures $D_k(\Z_p^d)\leq M$.
Then the claim of Theorem~\ref{thm:step} becomes
\[
\frac{M-2p-5}{3(p-1)/2}p+p^2 \geq M
\]
for $d=3$, and
\[
\frac{M-(6p-4)p^{d-3}\big(\frac{p^2+6p-3}{4p^2}-\frac{1}{dp}\big)}{3(p-1)/2}p
+ p^{d-1} \geq M
\]
for $d\geq 4$. After some computation one reaches the inequalities
$4p^2\geq 6p+25$ and $28p^4\geq 144p^3 +p^2-33$, which are satisfied
for $p\geq 7$. Hence the proof of Theorem~\ref{thm:step} is complete.

%$\frac{p^d-p}{p^2-p}(3p-7)+4$, remove one zero-sum of length $\leq p$,
%then zero-sums of length $\leq \frac{3p-1}{2}$, until we have less
%then $(6p-4)p^{d-3}+1$ points left. If $d=3$, then we obtain at least
%one more zero-sum in the remainder. If $d\geq 4$, we continue removing
%zero-sums of length $\leq 2p$ until we have less than
%$(6p-4)p^{d-4}+1$ points left. Let $N$ be the number of zero-sums
%obtained in this way. If $d=3$, then 
%\begin{eqnarray*}
%N & \geq & \left\lceil\frac{(p+1)(3p-7)+4-p-(6p-3)}{(3p-1)/2}\right\rceil+2\\
% & = & \left\lceil\frac{6p^2-21p}{3p-1}\right\rceil+2 = 2p-4,
%\end{eqnarray*}
%that is, the required condition $M\leq p^{d-1}+pN$ becomes
%\[
%(p+1)(3p-7)+4 \leq p^2 + (2p-4)p,
%\]
%which is $2p^2-4p-3\leq 3p^2-4p$, which is certainly true. Hence our
%claim follows for $d=3$.
%
%For $d\geq 4$ we get
%\begin{eqnarray*}
%N & \geq & \left\lceil\frac{\frac{p^d-p}{p^2-p}(3p-7)+4-p-(6p-4)p^{d-3}-1}
%{(3p-1)/2}\right\rceil\\ 
%&& + \left\lceil\frac{(6p-4)(p-1)p^{d-4}-(3p-1)/2}{2p}\right\rceil + 2\\
% & \geq & \left\lceil\frac{6p^d-26p^{d-1}+20p^{d-2}-8p^{d-3}}{3p^2-4p+1}
%\right\rceil + (3p-2)(p-1)p^{d-5}\\ 
%  & \geq & 2p^{d-2} - 3p^{d-3}-3p^{d-4} + \frac{4p^{d-3}-8p^{d-4}+2p^{d-5}}
%{3p^2-4p+1}\\ 
%  & \geq & 2p^{d-2} - 3p^{d-3}-3p^{d-4}
%\end{eqnarray*}
%Now the required condition $M\leq p^{d-1}+pN$ becomes 
%\[
%\frac{p^d-p}{p^2-p}(3p-7)+4 \leq p^{d-1} + 2p^{d-1}  -3p^{d-2}-3p^{d-3},
%\]
%that is
%\[
%3p^d -7p^{d-1} -3p+11 \leq 3p^d -6p^{d-1}+3p^{d-3},
%\]
%which becomes $p^{d-1}+3p^{d-3}+3p\geq 11$, which is certainly
%true. Hence our claim follows in this case as well. 

\end{document}